\documentclass[final,1p,times,margin=1in]{elsarticle} 

\usepackage{amssymb}
 \usepackage{amsthm}
\usepackage{amscd}
\usepackage{amsmath}
\usepackage{amsfonts}
\usepackage{color}
\usepackage{graphicx}
\usepackage{tikz}
\usepackage{url}
\newtheorem{theorem}{Theorem}[section]

\newtheorem{corollary}[theorem]{Corollary}
\newtheorem{remark}[theorem]{Remark}
\newtheorem{example}[subsection]{Example}
\usepackage[ruled]{algorithm2e}
\usepackage{mathrsfs}
\usepackage{titletoc}
\usepackage{float}

\usepackage{xcolor}

\newcommand{\best}[1]{\textbf{\textcolor{red}{#1}}}
\newcommand{\second}[1]{\emph{\textcolor{orange}{#1}}}

\usepackage{caption}
\usepackage{subcaption}
\usepackage{booktabs}
\usepackage{geometry}

\journal{******}

\begin{document}

\begin{frontmatter}

\title{An Efficient Entropy Flow on Weighted Graphs: Theory and Applications}

\author[ruc]{Juan Zhao}
\ead{zhaojuan0509@ruc.edu.cn}

\author[ruc]{Jicheng Ma}
\ead{2019202433@ruc.edu.cn}

\author[ruc]{Yunyan Yang\corref{cor1}}
\ead{yunyanyang@ruc.edu.cn}

\author[bnu]{Liang Zhao}
\ead{liangzhao@bnu.edu.cn}

\cortext[cor1]{Corresponding author}

\address[ruc]{School of Mathematics, Renmin University of China, Beijing, 100872, China}
\address[bnu]{School of Mathematical Sciences, Key Laboratory of Mathematics and Complex Systems of MOE,\\
Beijing Normal University, Beijing, 100875, China}

\begin{abstract}
We propose a novel entropy flow on weighted graphs, which provides a principled framework that characterizes the evolution of probability distributions over graph structures while sharing geometric intuition with discrete Ricci flow. We provide its rigorous formulation, establish its fundamental theoretical properties, and prove the long-time existence and convergence of its solutions. To demonstrate its applicability, we employ entropy flow for community detection in real-world networks. Empirically, it achieves detection accuracy fully comparable to that of discrete Ricci flow. Crucially, by avoiding computations of optimal transport distances and shortest paths, our approach overcomes the fundamental computational bottleneck of Ollivier and Lin-Lu-Yau Ricci flows. As a result, entropy flow requires only $1.61\%$-$3.20\%$ of the computation time of Ricci flow. These results indicate that entropy flow provides a theoretically rigorous and computationally efficient framework for large-scale graph analysis.

\end{abstract}

\begin{keyword}
entropy flow \sep Ricci flow\sep community detection\sep weighted graph
\MSC[2020]05C21\sep 35R02 \sep 68Q06
\end{keyword}

\end{frontmatter}

\titlecontents{section}[0mm]
                       {\vspace{.2\baselineskip}}
                       {\thecontentslabel~\hspace{.5em}}
                        {}
                        {\dotfill\contentspage[{\makebox[0pt][r]{\thecontentspage}}]}
\titlecontents{subsection}[3mm]
                       {\vspace{.2\baselineskip}}
                       {\thecontentslabel~\hspace{.5em}}
                        {}
                       {\dotfill\contentspage[{\makebox[0pt][r]{\thecontentspage}}]}

\setcounter{tocdepth}{2}



\numberwithin{equation}{section}
\section{Introduction}

Developing mathematically well-founded dynamical models on graphs is a central problem in applied mathematics and network science, which arises naturally in a wide range of applications, including biological networks, social systems, and information processing architectures. Understanding how probability distributions, structural patterns, and functional organizations evolve on such discrete structures is fundamental for both theoretical analysis and practical modeling.

Entropy-based methods provide a natural framework for characterizing uncertainty, information flow, and complexity in networked systems. Originating from information theory and statistical mechanics, entropy and relative entropy have been extensively studied in the context of stochastic processes, diffusion dynamics, and random walks on graphs \cite{Shannon1948, Kullback1951, Thomas2005}. In network analysis, entropy-related quantities have been employed to quantify structural heterogeneity, robustness, and centrality \cite{Anand2009, Dehmer2008}. These approaches offer a probabilistic perspective for investigating the interplay between local interactions and global organization.

In parallel, geometric methods on graphs have attracted increasing attention over the past two decades. Inspired by Riemannian geometry, discrete analogues of Ricci curvature have been introduced to capture transport and connectivity properties of networks. Notable examples include Ollivier Ricci curvature \cite{Ollivier2009ricci} and Lin-Lu-Yau Ricci curvature \cite{Lin2011Ricci}. These notions provide a powerful geometric lens for analyzing network's structural stability and clustering behavior \cite{Sanhu2015graph, Ni2019community}. By linking curvature to optimal transport and random walk behavior, Ricci curvature establishes deep connections between geometry, probability, and graph theory.

Ricci flow is a evolution equation which plays a fundamental role in geometry and topology \cite{Hamilton1982three}, describing how a Riemannian metric on a  manifold  evolves over a parameter $t$. The equation
$$\partial_t g_{ij}=-2R_{ij},$$
where $g_{ij}$ is the metric tensor and $R_{ij}$ is the Ricci curvature tensor, deforms the metric to make curvature more uniform. Building upon notions of discrete curvature, several formulations of Ricci flow on graphs have been proposed and studied \cite{Ollivier2009ricci, Bai2024ollivier, Bai2025ricci, Ma2025modified, Li2026convergence}. Mathematically, discrete Ricci flow is formulated as a dynamical process that iteratively deforms edge weights based on local curvature. This offers a powerful framework that combines geometric insight with dynamical systems theory. Such methods have proven effective in revealing latent geometric structures and enhancing performance in tasks including community detection, anomaly identification, and network alignment \cite{Sanhu2015graph, Ni2018network, Ni2019community, Lai2022normalized, Ma2024evolution, Ma2025piecewise}.

However, despite their theoretical elegance and empirical success, existing Ricci flow methods face a significant computational bottleneck. The calculation of Ollivier or Lin-Lu-Yau curvature requires solving optimal transport problems and computing shortest paths, which becomes prohibitively expensive for large-scale networks and complicates the analysis of the resulting nonlinear dynamical system. From a theoretical perspective, the long-time existence of existing Ricci flow methods is not always guaranteed. To the best of our knowledge, only \cite{Ma2025modified} has established this property under general conditions. Furthermore, proving the convergence of the flow poses an even more non-trivial challenge \cite{Bai2024ollivier, Bai2025ricci}. On the practical side, these theoretical gaps make it difficult to ensure that the iterative process evolves as intended or to determine whether it will converge at all. Consequently, the high computational complexity of discrete Ricci flow, together with the lack of theoretical guarantees regarding its long-time existence and convergence, has severely constrained its broader adoption. This motivates the search for alternative dynamical frameworks on graphs that retain geometric intuition while being both computationally efficient and theoretically well-founded.

In this work, we propose a novel entropy flow on weighted graphs based on the Kullback–Leibler divergence. Our approach formulates the evolution of probability distributions on graph vertices as a nonlinear dynamical system driven by information-theoretic principles. The proposed entropy flow preserves essential geometric intuition analogous to that of discrete Ricci flow, while avoiding computations of optimal transport distances and shortest paths. This formulation leads to a substantial reduction in computational complexity and facilitates mathematical analysis.

From a theoretical perspective, we establish a rigorous foundation for the proposed entropy flow. We prove the uniqueness, long-time existence and convergence of solutions under mild assumptions. Several illustrative examples are provided to elucidate the geometric and structural interpretation of the flow. These results yield a well-posedness theory for an information-driven dynamical system on weighted graphs. From an application standpoint, we apply entropy flow to community detection in real-world networks. Experiments on multiple benchmark datasets demonstrate that the proposed method achieves accuracy comparable to that of Ricci-flow-based approaches in terms of ARI, NMI, and modularity, while requiring only a small fraction ($1.61\%$-$3.20\%$) of their computational time, highlighting the scalability of the proposed framework.

The main contributions of this paper are summarized as follows: (1) We introduce a novel entropy flow on weighted graphs and prove uniqueness, long-time existence and convergence for the proposed flow. (2) We develop efficient algorithms and validate the theoretical results through community detection experiments, while avoiding computations of optimal transport distances and shortest paths. These results indicate that entropy flow provides a mathematically rigorous and computationally efficient alternative to discrete Ricci flow.

\section{Definition of entropy flow and main results}

Let us first define an $\alpha$-lazy outward random walk centered at any vertex. Then we compute an entropy between two such random walks
centered at vertices of an edge. Finally,  we establish an evolution of each edge according to this entropy.

\subsection{$\alpha$-lazy outward random walk}

Let $G=(V,E,w)$ be a connected weighted finite graph, where $V=\{x_1,x_2,\cdots,x_n\}$ is the vertex set, $E=\{e_1,e_2,\cdots,e_m\}$ is the edge set and $w: E\rightarrow (0,+\infty)$ is
the weight function on $E$.
For any $x,y\in V$, we denote $x\sim y$ if $xy\in E$. For any set $A\subset V$ and a vertex $x\in V$, we say
$x\sim A$ if there is a vertex $a\in A$ such that $x\sim a$ and $x\not\in A$.

 A $1$-step neighbor set with respect to $x$ refers to the set composed of all
 $1$-step neighbors of $x$, namely
 $$N_1(x)=\left\{u\in V: u\sim N_0(x)\right\},$$
 where $N_0(x)=\{x\}$;
 Inductively, a $j$-step neighbor set with respect to $x$ refers to the set composed of all
 $j$-step neighbors of $x$, namely
$$N_j(x)=\left\{u\in V: u\sim \cup_{k=0}^{j-1}N_k(x)\right\}, \forall j\geq 2.$$
Clearly $N_i(x)\cap N_j(x)=\varnothing$ for any $j\not=i$, and $N_j(x)=\varnothing$ if $j\geq n$ or $j\geq m+1$.  For clarity,
we set
$$n_x=\max\{j:N_j(x)\not=\varnothing\}.$$
Obviously, $V$ can be decomposed into the union of $n_x$ subsets $N_j$.

Fix $x\in V$ and $z\in N_\ell(x)$ for some integer $\ell$, we define a set of all outward paths from $x$ to $z$ by
$$\mathbf{\Gamma}_{\overrightarrow{xz}}^\ell=\left\{\gamma: x=z_0\rightarrow z_1\rightarrow z_2\rightarrow \cdots\rightarrow z_\ell=z \,\left|\, z_j\in N_j(x),\, z_{j}z_{j+1}\in E,\, \forall 1\leq j\leq \ell-1 \right\}.\right.$$
If $z\in N_1(x)$, to walk from $x$ to $z$, it is only allowed to move along the paths in $\mathbf{\Gamma}_{\overrightarrow{xz}}^1$. Thus the probability of walking from $x$ to $z$ is assumed to be
$$\mathscr{P}(x,z)=\frac{w_{xz}}{\sum_{u\in N_1(x)}w_{xu}}.$$
If $z\in N_\ell(x)$,  considering only paths in $\mathbf{\Gamma}_{\overrightarrow{xz}}^\ell$,
the probability of walking from $x$ to $z$  is
$$\mathscr{P}(x,z)=\sum_{z_1\in N_1(x),\cdots,z_{\ell-1}\in N_{\ell-1}(x)}\frac{w_{xz_1}}{\sum_{u\in N_1(x)}w_{xu}}\frac{w_{z_1z_2}}{\sum_{u\in N_2(x)}w_{z_1u}}\cdots\frac{w_{z_{\ell-1}z}}
{\sum_{u\in N_\ell(x)}w_{z_{\ell-1}u}},$$
where $z_0=x$, $z_\ell=z$, and in the summation terms, $w_{z_{j-1}z_j}=0$ if $z_{j-1}$ is not adjacent to $z_j$.

Given $\alpha\in (0,1)$, we shall construct a special $\alpha$-lazy outward random walk. Assume the  probability that
node $x$ remains stationary is $\alpha$. If $z\in N_1(x)$ and $z\sim N_2(x)$, then the probability of walking from $x$ to $z$
is assumed to be $(1-\alpha)\alpha\mathscr{P}(x,z)$. While if $z\in N_1(x)$ and $z\not\sim N_2(x)$,
then the probability of walking from $x$ to $z$ is
assumed to be $(1-\alpha)\mathscr{P}(x,z)$. Here and in the sequel, $z\not\sim N_2(x)$ means $z$ is not a neighbor of the set $N_2(x)$.
In general, if $z\in N_j(x)$ and $z\sim N_{j+1}(x)$, then the probability of walking from $x$ to $z$, along any possible outward path
$\gamma\in\mathbf{\Gamma}_{\overrightarrow{xz}}^j$, is assigned a value
$(1-\alpha)^j\alpha\mathscr{P}(x,z)$;
if $z\in N_j(x)$ and $z\not\sim N_{j+1}(x)$, then the probability of walking from $x$ to $z$, along any possible outward path
$\gamma\in\mathbf{\Gamma}_{\overrightarrow{xz}}^j$, is assigned a value $(1-\alpha)^j\mathscr{P}(x,z)$.

To summarize the above discussion, we have the following definition:

\vspace{0.3cm}
\noindent{\bf Definition A.} Fix $\alpha\in (0,1)$ and $x\in V$. An {\it $\alpha$-lazy outward random walk}
$\textsf{R}_x^\alpha$ is defined as follows:
\begin{equation}\label{distribution}
\textsf{R}_x^\alpha(z)=\left\{
\begin{array}{lll}
\alpha &{\rm if}& z\in N_0(x)=\{x\}\\[1.5ex]
(1-\alpha)^j\alpha\mathscr{P}(x,z) &{\rm if}& z\in N_j(x),\,z\sim N_{j+1}(x)\\[1.5ex]
(1-\alpha)^j\mathscr{P}(x,z) &{\rm if}& z\in N_j(x),\,z\not\sim N_{j+1}(x)\\[1.5ex]
j=1,2,\cdots,n_x.
\end{array}
\right.
\end{equation}

It is easy to check that for any fixed number $\alpha\in (0,1)$, there hold
\begin{equation*}
\textsf{R}_x^\alpha(z)>0,\quad\forall x,z\in V
\end{equation*}
and
\begin{equation*}
\sum_{z\in V}\textsf{R}_x^\alpha(z)=1,\quad\forall x\in V.
\end{equation*}

\subsection{The KL divergence}
The Kullback-Leibler (KL) divergence, which is also called the relative entropy,  measures how one probability distribution diverges from a second, reference probability
distribution. It is widely used in statistics, machine learning, and information theory. In the graph setting, for two
probability measures $P$ and $Q$ on $V$, the KL divergence reads
$$\mathcal{D}_{KL}(P,Q)=\sum_{z\in V}P(z)\log\frac{P(z)}{Q(z)}.$$
According to  \cite{Kullback1951}, the KL divergence is non-symmetric and non-negative.
Fix any two nodes $x,y\in V$. Coming back to the $\alpha$-lazy outward random walks $\textsf{R}_x^\alpha$ and $\textsf{R}_y^\alpha$,
we have their KL divergence as
\begin{equation}\label{entropy}\mathcal{D}_{KL}(\textsf{R}_x^\alpha,\textsf{R}_y^\alpha)=\sum_{z\in V}\textsf{R}_x^\alpha(z)\log\frac{\textsf{R}_x^\alpha(z)}{\textsf{R}_y^\alpha(z)}.\end{equation}

\subsection{The entropy flow}
Let $\mathfrak{D}:(0,1)\times E\rightarrow \mathbb{R}$ be a function defined by
\begin{equation}\label{entropy-alpha}\mathfrak{D}_{e}^\alpha=\mathfrak{D}(\alpha,e)=\mathcal{D}_{KL}(\textsf{R}_x^\alpha,\textsf{R}_y^\alpha)+
\mathcal{D}_{KL}(\textsf{R}_y^\alpha,\textsf{R}_x^\alpha),\end{equation}
where $e=xy\in E$, $\mathcal{D}_{KL}(\cdot,\cdot)$ is the KL divergence as in (\ref{entropy}).
We propose an entropy flow
\begin{equation}\label{entropy-flow}
\left\{\begin{array}{lll}w_e^\prime(t)=\mathfrak{D}_e^\alpha(t),&& t>0\\[1.5ex]
w_e(t)>0, && t>0\\[1.5ex]w_e(0)=w_{0,e},&&\forall e\in E,\end{array}\right.
\end{equation}
where $\mathfrak{D}_e^\alpha(t)$ denotes the entropy $\mathfrak{D}_e^\alpha$ with respect to the weight $\mathbf{w}(t)=
(w_{e_1}(t),\cdots,w_{e_m}(t))$.
Hereafter, to simplify notations, we use the name "entropy" to denote any related entropies including KL divergence,
relative entropy and others.

\vspace{0.3cm}
Our main theoretical result is the following:

\begin{theorem}\label{existence}
Let $\alpha\in(0,1)$ and $\mathbf{w}_0=(w_{0,e_1},\cdots,w_{0,e_m})\in\mathbb{R}^m_+=\{\mathbf{w}=(w_{e_1},\cdots,w_{e_m})\in\mathbb{R}^m: w_{e_j}>0,j=1,\cdots,m\}$ be fixed. Then the entropy flow (\ref{entropy-flow}) has a unique solution $\mathbf{w}(t)=(w_{e_1}(t),\cdots,w_{e_m}(t))$ for $t\in[0,+\infty)$. Furthermore, we have either $(i)$ there holds
$$\lim_{t\rightarrow+\infty}w_{e_j}(t)=+\infty \quad{\rm for\,\,all}\,\, 1\leq j\leq m,$$
or $(ii)$ there exist constants $w_{e_j}^\ast>0$ such that
$$\lim_{t\rightarrow +\infty}w_{e_j}(t)=w_{e_j}^\ast,\,\,\lim_{t\rightarrow +\infty}\mathfrak{D}_{e_j}^\alpha(t)=\mathfrak{D}_{e_j}^\ast=0\quad{\rm for\,\,all}\,\, 1\leq j\leq m,$$
where $\mathfrak{D}^\alpha_{e_j}(t)$ and $\mathfrak{D}_{e_j}^\ast$ are the entropies on $e_j$ with respect to the weights
$\mathbf{w}(t)$ and $\mathbf{w}^\ast=(w_{e_1}^\ast,\cdots,w_{e_m}^\ast)$ respectively.
\end{theorem}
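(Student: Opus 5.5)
The plan rests on two structural features of $\mathfrak{D}^\alpha_e$. First, every quantity $\mathscr{P}(x,z)$ is a finite sum of products of quotients $w_{ab}/\sum_u w_{au}$. Hence $\textsf{R}^\alpha_x(z)$ is a smooth, positive function on $\mathbb{R}^m_+$ that is homogeneous of degree $0$ in $\mathbf{w}$. Consequently $\mathfrak{D}^\alpha_e(\mathbf{w})$ is smooth, in particular locally Lipschitz, on $\mathbb{R}^m_+$, and it is invariant under $\mathbf{w}\mapsto\lambda\mathbf{w}$. Second, by nonnegativity of the KL divergence, $\mathfrak{D}^\alpha_e\ge 0$. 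Local existence and uniqueness therefore follow from the Picard--Lindel\"of theorem on the open set $\mathbb{R}^m_+$. Along the solution every $w_e(t)$ is nondecreasing, so $w_e(t)\ge w_{\min}:=\min_j w_{0,e_j}>0$; the positivity constraint in (\ref{entropy-flow}) is thus automatic and the solution can never approach $\partial\mathbb{R}^m_+$.

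For long-time existence I will rule out finite-time blow-up with a logarithmic bound on the velocity. Each factor $w_{ab}/\sum_u w_{au}$ appearing in $\mathscr{P}$ is at least $w_{\min}/(n\,W)$, where $W(t)=\max_e w_e(t)$. Since every path used has length at most $n$ and $\alpha,1-\alpha$ are fixed, we get $\textsf{R}^\alpha_x(z)\ge c_0\,(w_{\min}/(nW))^{n}$ for a constant $c_0>0$ depending only on $\alpha$ and $n$. Together with $\textsf{R}\le 1$, this gives
$$0\le \mathfrak{D}^\alpha_e(t)\le C\bigl(1+\log W(t)\bigr)$$
with $C$ depending only on $\alpha,n,w_{\min}$. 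Hence the upper Dini derivative of $W$ is at most $C(1+\log W)$. Comparison with the ODE $y'=C(1+\log y)$, whose solutions exist globally and grow slower than $t\log t$, shows $W$ stays finite on every bounded interval. The maximal solution is therefore defined on $[0,+\infty)$ and is unique.

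For the asymptotics, monotonicity gives limits $w_e^\ast:=\lim_{t\to\infty}w_e(t)\in(0,+\infty]$ for each edge. If all limits are finite, then $\mathbf{w}(t)\to\mathbf{w}^\ast\in\mathbb{R}^m_+$, and by continuity $\mathfrak{D}^\alpha_e(t)\to\mathfrak{D}^\ast_e$. Since $w_e(t)=w_{0,e}+\int_0^t\mathfrak{D}^\alpha_e$ converges, the limit of the integrand must be $0$, which gives alternative $(ii)$. It remains to exclude the mixed situation, where some limits are finite and some are infinite. Assume it occurs. By connectedness there is a vertex $x$ incident to an edge $e=xy$ with $w^\ast_e<\infty$ and to an edge $f=xv$ with $w^\ast_f=\infty$. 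Along the flow the ratio $w_e/w_f\to 0$, so by scale invariance $\mathfrak{D}^\alpha_e(t)$ is evaluated at increasingly degenerate normalized weights. I will aim to show $\liminf_{t\to\infty}\mathfrak{D}^\alpha_e(t)>0$, which contradicts $\int_0^\infty\mathfrak{D}^\alpha_e\,dt=w_e^\ast-w_{0,e}<\infty$.

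The key sub-claim is that $\textsf{R}^\alpha_x(y)\to 0$, because $\mathscr{P}(x,y)=w_{xy}/\sum_u w_{xu}\to 0$ as $w_{xv}\to\infty$. Meanwhile $\textsf{R}^\alpha_y(y)=\alpha$ stays fixed. The term $\textsf{R}^\alpha_y(y)\log\bigl(\textsf{R}^\alpha_y(y)/\textsf{R}^\alpha_x(y)\bigr)$ in $\mathcal{D}_{KL}(\textsf{R}^\alpha_y,\textsf{R}^\alpha_x)$ then tends to $+\infty$. The remaining terms are bounded below, since $\sum p\log(p/q)\ge -\sum q\ge -1$ over any subset. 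So $\mathfrak{D}^\alpha_e(t)\to+\infty$, a contradiction; mixed behaviour is impossible, and if any limit is infinite then all are, giving $(i)$.

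I expect this last step to be the main obstacle. Precisely, the argument needs the degeneracy to be visible at the vertex $x$, with a light edge and a heavy edge both incident to $x$, so that the claim $\mathscr{P}(x,y)\to 0$ is justified. The careful point is choosing $e$ and $f$ sharing an endpoint, which connectedness permits by walking from a finite edge to an infinite one.
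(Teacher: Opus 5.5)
Your proposal is correct and follows essentially the same route as the paper. Short-time existence comes from local Lipschitz continuity, and monotonicity from nonnegativity of the KL divergence; a logarithmic bound $\mathfrak{D}_e^\alpha\le C(1+\log W)$ rules out finite-time blow-up; two adjacent edges, one with bounded and one with unbounded weight, force a one-step random-walk mass to vanish so that $\mathfrak{D}_e^\alpha\to+\infty$; and the integrability argument gives $\mathfrak{D}_e^\ast=0$. The differences are cosmetic: you use $\max_e w_e$ with a Dini-derivative comparison instead of $\sum_e w_e$, the bound $p\log(p/q)\ge p-q$ instead of $a\log a\ge -e^{-1}$, and swap the roles of $x$ and $y$; your homogeneity and scale-invariance remark is never actually needed.
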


As a consequence of Theorem \ref{existence}, we have

\begin{corollary}\label{Corollary}
Let $G=(V,E,\mathbf{w}_0)$ be a triangle, where $V=\{x,y,z\}$ is the vertex set, $E=\{xy,yz,xz\}$ is the edge set, and
$\mathbf{w}_0=(w_{0,xy},w_{0,yz},w_{0,xz})$ is the initial weight on $E$. Let $\alpha\in(0,1)$ and
$\mathbf{w}(t)=(w_{xy}(t),w_{yz}(t),w_{xz}(t))$ be the unique global solution of the entropy flow $w_e^\prime(t)=\mathfrak{D}_e^\alpha(t)$
with $\mathbf{w}(0)=\mathbf{w}_0$. If $\alpha\not=1/3$, then ${w}_e(t)\rightarrow +\infty$ for all $e\in E$.
\end{corollary}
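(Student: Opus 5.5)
The plan is to apply the dichotomy of Theorem \ref{existence} and rule out alternative $(ii)$ when $\alpha\neq 1/3$. That alternative forces a positive limiting weight $\mathbf{w}^\ast$ at which every entropy $\mathfrak{D}^\ast_e$ vanishes. I will show that such a configuration can exist on a triangle only when $\alpha=1/3$. Then alternative $(i)$ must hold, which is exactly the claim.

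\emph{Step 1: the random walks on a triangle.} Take the vertex $x$. Then $N_0(x)=\{x\}$ and $N_1(x)=\{y,z\}$, so $N_2(x)=\varnothing$ and neither $y$ nor $z$ is adjacent to $N_2(x)$. By Definition A,
$$\textsf{R}_x^\alpha(x)=\alpha,\quad \textsf{R}_x^\alpha(y)=(1-\alpha)\frac{w_{xy}}{w_{xy}+w_{xz}},\quad \textsf{R}_x^\alpha(z)=(1-\alpha)\frac{w_{xz}}{w_{xy}+w_{xz}},$$
and the walks centered at $y$ and $z$ are given by the analogous formulas. All of these values are continuous in $\mathbf{w}\in\mathbb{R}^3_+$. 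Hence $\mathfrak{D}^\ast_e$, computed at $\mathbf{w}^\ast$, is the value of the entropy at the limit weight; in particular $\mathfrak{D}^\ast_e$ is well defined because $w^\ast_e>0$.

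\emph{Step 2: vanishing entropy forces equal distributions.} Each $\mathfrak{D}_e^\alpha$ is a sum of two KL divergences, and each is nonnegative by \cite{Kullback1951}. So $\mathfrak{D}^\ast_{xy}=0$ gives $\mathcal{D}_{KL}(\textsf{R}_x^\alpha,\textsf{R}_y^\alpha)=0$ at $\mathbf{w}^\ast$. By the equality case of Gibbs' inequality, both distributions are strictly positive, so $\textsf{R}_x^\alpha=\textsf{R}_y^\alpha$. The same argument on the edges $yz$ and $xz$ gives $\textsf{R}_x^\alpha=\textsf{R}_y^\alpha=\textsf{R}_z^\alpha=:P$.

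\emph{Step 3: the contradiction.} Each walk puts mass $\alpha$ on its own center. Therefore
$$P(x)=\textsf{R}_x^\alpha(x)=\alpha,\quad P(y)=\textsf{R}_y^\alpha(y)=\alpha,\quad P(z)=\textsf{R}_z^\alpha(z)=\alpha.$$
Since $P$ is a probability measure on $V=\{x,y,z\}$, this gives $3\alpha=1$, that is $\alpha=1/3$. This contradicts the hypothesis, so alternative $(ii)$ fails and Theorem \ref{existence} yields $w_e(t)\to+\infty$ for every $e\in E$.

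The only delicate point is Step 2: it must be justified that the limit $\mathfrak{D}^\ast_e=0$ is the entropy evaluated at $\mathbf{w}^\ast$, which is part of the statement of Theorem \ref{existence} together with continuity. It must also be justified that a vanishing symmetrized KL divergence forces equality of the measures. Once that is in place, the counting argument in Step 3 is immediate. As a remark, for $\alpha=1/3$ the equal weights $w_{xy}=w_{yz}=w_{xz}$ give a stationary point, so the hypothesis $\alpha\neq 1/3$ is genuinely needed.
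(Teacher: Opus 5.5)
Your proposal is correct and follows the paper's proof. The paper also invokes the dichotomy of Theorem \ref{existence} and notes that alternative $(ii)$ forces $\mathfrak{D}^\ast_e=0$ on all three edges, hence $\textsf{R}_x^\ast=\textsf{R}_y^\ast=\textsf{R}_z^\ast$; it then concludes that this is impossible unless $\alpha=1/3$. Your Step 3 (each walk puts mass $\alpha$ on its own center, so $3\alpha=1$) usefully makes explicit the equivalence that the paper only asserts.
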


\begin{remark}
Similar to the entropy flow (\ref{entropy-flow}), one can also consider
\begin{equation}\label{entropy-flow-2}
\left\{\begin{array}{lll}w_{xy}^\prime(t)=\mathcal{D}_{KL}(\mathsf{R}_x^\alpha(t),\mathsf{R}_y^\alpha(t)),&& t>0\\[1.5ex]
w_{xy}(t)>0, && t>0\\[1.5ex]w_{xy}(0)=w_{0,xy},&&\forall xy\in E\end{array}\right.
\end{equation}
or
\begin{equation}\label{entropy-flow-3}
\left\{\begin{array}{lll}w_{xy}^\prime(t)=\mathcal{D}_{KL}(\mathsf{R}_y^\alpha(t),\mathsf{R}_x^\alpha(t)),&& t>0\\[1.5ex]
w_{xy}(t)>0, && t>0\\[1.5ex]w_{xy}(0)=w_{0,xy},&&\forall xy\in E.\end{array}\right.
\end{equation}
Completely analogous to Theorem \ref{existence}, one also has the existence and uniqueness
of solutions to (\ref{entropy-flow-2}) and (\ref{entropy-flow-3}). Also, these two entropy flows can be applied to community detection,
with performance compatible with that of (\ref{entropy-flow}). We omitted the details but leave them to interested readers.
\end{remark}

We shall give several examples of entropy flow, and explain why the entropy flow
can be used to detect communities in Section \ref{Example}; The proof of Theorem \ref{existence} and Corollary \ref{Corollary} is based on the theory of ODE, and will be
given in Section \ref{proof}; As an application of Theorem \ref{existence}, community detection will be discussed in Section \ref{application}.
What we didn't expect was that entropy flow
could also be used to solve the problem of community detection, where it performs as well as curvature flow based on Ollivier's Ricci curvature or Lin-Lu-Yau's Ricci curvature.
Throughout this paper, we often denote various constants by the same $C$ from line to line, even in the same line. The readers can distinguish it
from the context.

\section{Examples of entropy flow}\label{Example}
In this section, we give some examples of the entropy flow (\ref{entropy-flow}). In some special cases, solutions of
the entropy flows can be written explicitly.

\begin{example} For the regular polygon graph, we can solve the entropy flow explicitly, say\\
1) $G = (V,E,\mathbf{w}_0)$ is a line segment, where $V = \{x,y\}$, $E = \{xy\}$, and $\mathbf{w}_0=w_0$.
Assume $w(t)$ is the unique solution of the entropy flow (\ref{entropy-flow}), $t\in[0,+\infty)$.  Note that for
$\alpha \in (0,1)$, the $\alpha$-lazy outward random walks with respect to $w(t)$ read as
$$
\mathsf{R}_x^\alpha(u,t) = \begin{cases} \alpha, & u=x,\\ 1-\alpha, & u=y, \end{cases} \qquad\qquad
\mathsf{R}_y^\alpha(u,t) = \begin{cases} 1-\alpha, & u=x,\\ \alpha, & u=y. \end{cases}
$$
Thus the entropy
$$\mathfrak{D}_{xy}^\alpha(t)=\mathcal{D}_{KL}(\mathsf{R}_x^\alpha(t), \mathsf{R}_y^\alpha(t))
+ \mathcal{D}_{KL}(\mathsf{R}_y^\alpha(t), \mathsf{R}_x^\alpha(t))=(2-4\alpha) \log \frac{1-\alpha}{\alpha},$$
and whence
$$w(t)=w_0+\left((2-4\alpha) \log \frac{1-\alpha}{\alpha}\right)t.$$
2) $G = (V,E,\mathbf{w}_0)$ is a triangle,
where $V = \{x,y,z\}$, $E = \{xy, yz, zx\}$ and $\mathbf{w}_0 = (w_0, w_0, w_0)$.
We hope to find a solution of (\ref{entropy-flow}) such that it has the form $\mathbf{w}(t)=(w(t),w(t),w(t))$,
i.e. weights on edges are the same. Under this assumption,
for $\alpha \in (0,1)$ and $x,y\in V$, the $\alpha$-lazy outward random walk with respect to $\mathbf{w}(t)$ is written as
\begin{equation*}
\begin{aligned}
\mathsf{R}_x^\alpha(u,t) &=
\begin{cases}
\alpha, & u=x,\\
\frac{1-\alpha}{2}, & u=y,\\
\frac{1-\alpha}{2}, & u=z,
\end{cases} \qquad \qquad
\mathsf{R}_y^\alpha(u,t) &=
\begin{cases}
\frac{1-\alpha}{2}, & u=x,\\
\alpha, & u=y,\\
\frac{1-\alpha}{2}, & u=z.
\end{cases}
\end{aligned}
\end{equation*}
It follows that
$$\mathfrak{D}_{xy}^\alpha(t)=\mathcal{D}_{KL}(\mathsf{R}_x^\alpha(t), \mathsf{R}_y^\alpha(t))
+ \mathcal{D}_{KL}(\mathsf{R}_y^\alpha(t), \mathsf{R}_x^\alpha(t))
= (3\alpha-1)\log\frac{2\alpha}{1-\alpha}$$
and that
$$w(t)=w_0+\left((3\alpha-1)\log\frac{2\alpha}{1-\alpha}\right)t,\quad\forall t\in[0,+\infty).$$
By Theorem \ref{existence}, the solution of (\ref{entropy-flow}) is unique, which implies $\mathbf{w}(t)=(w(t),w(t),w(t))$ is
the exact unique solution. \\
3) $G = (V,E,\mathbf{w}_0)$ is a general regular polygon. Using the method in 2), one can easily write out the unique solution to the entropy flow.
\end{example}

Next, we present an example to illustrate how the entropy flow can be used for community detection.

\begin{example}
Let \( G=(V,E,\mathbf{w}_0) \) be a graph described as in Figure \ref{fig-entropy-flow-1}. The weight on each edge is assumed to be $1$.
Take $\alpha=0.5$. The entropy on each edge with respect to the initial weight $\mathbf{w}_0$ is calculated as
$\mathfrak{D}_{x_3x_4}^\alpha(0) = 2.19$, $\mathfrak{D}_{x_1x_2}^\alpha(0)
= \mathfrak{D}_{x_5x_6}^\alpha(0) = 0.35$,
$\mathfrak{D}_{x_1x_3}^\alpha(0)
= \mathfrak{D}_{x_2x_3}^\alpha(0)
= \mathfrak{D}_{x_4x_5}^\alpha(0)
= \mathfrak{D}_{x_4x_6}^\alpha(0)= 0.93$.
Choose $s=0.1$, $t_j=js$, $j=0,1,2,\cdots$. One discrete versions of (\ref{entropy-flow}) says
$$w_e(t_j)=
w_e(t_{j-1})
+
s\,\mathfrak{D}_e^\alpha(t_{j-1})=w_e(0)
+
s \sum_{k=0}^{j-1}
\mathfrak{D}_e^\alpha(t_k).
$$
At $t_{10}$, the edge weights become
$w_{x_3x_4}(t_{10}) = 2.80$, $w_{x_1x_2}(t_{10})
= w_{x_5x_6}(t_{10}) = 1.43$,
$w_{x_1x_3}(t_{10})= w_{x_2x_3}(t_{10})= w_{x_4x_5}(t_{10})
= w_{x_4x_6}(t_{10}) = 1.94$.
Note that $w_{x_3x_4}$ grows significantly faster than all
other six edge weights. Deleting $x_3x_4$, one gets two communities as the final graph.

\begin{figure}[h]
\centering

\definecolor{highE}{RGB}{200,50,50}   
\definecolor{midE}{RGB}{50,80,200}    
\definecolor{lowE}{RGB}{70,70,70}     

\begin{tikzpicture}[scale=1.1]

\begin{scope}
\coordinate (x1) at (-0.5,0.5);
\coordinate (x2) at (-0.5,-0.5);
\coordinate (x3) at (0.2,0);
\coordinate (x4) at (1.3,0);
\coordinate (x5) at (2,0.5);
\coordinate (x6) at (2,-0.5);

\draw[lowE, thick] (x1)--(x2)
    node[midway,left] {$w=1$};
\draw[lowE, thick] (x5)--(x6);

\draw[midE, thick] (x1)--(x3)
    node[midway,right=3pt] {$w=1$};
\draw[midE, thick] (x2)--(x3);
\draw[midE, thick] (x4)--(x5);
\draw[midE, thick] (x4)--(x6);

\draw[highE, very thick] (x3)--(x4)
    node[midway,below] {$w=1$};

\foreach \p in {x1,x2,x3,x4,x5,x6}
    \fill (\p) circle (1.5pt);

\node[above left] at (x1) {$x_1$};
\node[below left] at (x2) {$x_2$};
\node[below] at (x3) {$x_3$};
\node[below] at (x4) {$x_4$};
\node[above right] at (x5) {$x_5$};
\node[below right] at (x6) {$x_6$};
\end{scope}

\draw[->, thick] (3.3,0) -- (4.2,0)
    node[midway,above] {\scriptsize entropy flow};

\begin{scope}[shift={(6.2,0)}]
\coordinate (y1) at (-1,0.8);
\coordinate (y2) at (-1,-0.8);
\coordinate (y3) at (-0.1,0);
\coordinate (y4) at (2.1,0);
\coordinate (y5) at (3,0.8);
\coordinate (y6) at (3,-0.8);

\draw[lowE, thick] (y1)--(y2);
\draw[lowE, thick] (y5)--(y6)
 node[midway,right] {$w=1.43$};

\draw[midE, thick] (y1)--(y3)
    node[midway,right=3pt] {$w=1.94$};
\draw[midE, thick] (y2)--(y3);
\draw[midE, thick] (y4)--(y5);
\draw[midE, thick] (y4)--(y6);

\draw[highE, very thick] (y3)--(y4)
    node[midway,below] {$w=2.80$};

\foreach \p in {y1,y2,y3,y4,y5,y6}
    \fill (\p) circle (1.5pt);

\node[above left] at (y1) {$x_1$};
\node[below left] at (y2) {$x_2$};
\node[below] at (y3) {$x_3$};
\node[below] at (y4) {$x_4$};
\node[above right] at (y5) {$x_5$};
\node[below right] at (y6) {$x_6$};
\end{scope}

\end{tikzpicture}

\vspace{0.6cm}

\begin{tikzpicture}[scale=1.1]

\draw[->, thick] (-1.2,0) -- (0.8,0)
    node[midway,above] {\scriptsize surgery};

\begin{scope}[shift={(2.5,0)}]
\coordinate (z1) at (-1,0.8);
\coordinate (z2) at (-1,-0.8);
\coordinate (z3) at (-0.1,0);
\coordinate (z4) at (2.1,0);
\coordinate (z5) at (3,0.8);
\coordinate (z6) at (3,-0.8);

\draw (z1)--(z2) (z2)--(z3) (z3)--(z1);
\draw (z4)--(z5) (z5)--(z6) (z6)--(z4);

\foreach \p in {z1,z2,z3,z4,z5,z6}
    \fill (\p) circle (1.5pt);

\node[above left] at (z1) {$x_1$};
\node[below left] at (z2) {$x_2$};
\node[below] at (z3) {$x_3$};
\node[below] at (z4) {$x_4$};
\node[above right] at (z5) {$x_5$};
\node[below right] at (z6) {$x_6$};
\end{scope}

\end{tikzpicture}

\caption{community detection}
\label{fig-entropy-flow-1}
\end{figure}
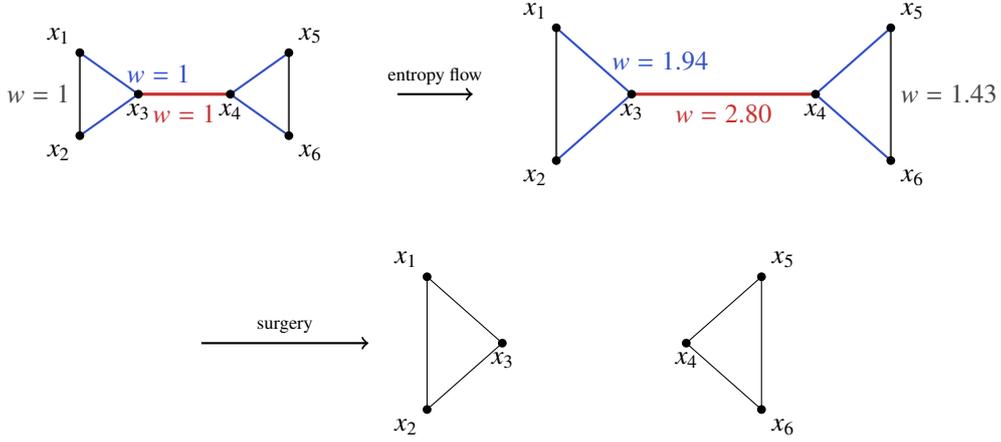

\end{example}

\section{Proof of the main theorem}\label{proof}

In this section, we will prove long time existence of the entropy flow (\ref{entropy-flow}) and the convergence of the weights along the entropy flow. This is based on the ODE theory. We will also provide the proof of Corollary \ref{Corollary}.

\vspace{0.3cm}
{\it Proof of Theorem \ref{existence}}. We divide the proof into several steps.
\vspace{0.3cm}

{\it Step 1. There holds $\mathcal{D}_{KL}(\textsf{R}_x^\alpha,\textsf{R}_y^\alpha)\geq 0$ for all $x,y\in V$ and all $\alpha\in(0,1)$. Moreover,
$\mathcal{D}_{KL}(\textsf{R}_x^\alpha,\textsf{R}_y^\alpha)= 0$ if and only if $\textsf{R}_x^\alpha(z)=\textsf{R}_y^\alpha(z)$ for all $z\in V$.}

The proof of this step is standard, we provide it here for readers' convenience.
Denote $V=\{z_1,z_2,\cdots,z_n\}$, $\theta_j=\textsf{R}_x^\alpha(z_j)$, $t_j={\textsf{R}_y^\alpha(z_j)}/{\textsf{R}_x^\alpha(z_j)}$,
$j=1,2,\cdots,n$. Since all $\theta_j>0$, $t_j>0$, $\theta_jt_j=\textsf{R}_y^\alpha(z_j)$, $\sum_{j=1}^n\theta_j=1$, $\sum_{j=1}^n\textsf{R}_y^\alpha(z_j)=1$ and $-\log t$ is a convex function of one variable $t\in(0,+\infty)$, we have
\begin{eqnarray*}
\mathcal{D}_{KL}(\textsf{R}_x^\alpha,\textsf{R}_y^\alpha)&=&-\sum_{j=1}^n\theta_j\log t_j\\
&\geq&-\log\left(\sum_{j=1}^n\theta_jt_j\right)\\
&=&-\log\left(\sum_{j=1}^n\textsf{R}_y^\alpha(z_j)\right)\\
&=&0.
\end{eqnarray*}
Clearly, the above equality holds if and only if $t_1=t_2=\cdots=t_n$, or equivalently $\textsf{R}_x^\alpha(z)=\textsf{R}_y^\alpha(z)$ for all $z\in V$.\\

{\it Step 2. Let $\mathbf{w}(t)=(w_{e_1}(t),\cdots,w_{e_m}(t))$ be a solution of the entropy flow (\ref{entropy-flow})
for $t\in [0,T)$. Then, on any edge $e\in E$, the weight $w_e(t)$ is increasing in $t\in [0,T)$.}

 Denote $\mathscr{P}$ as the set of all probability measures and $\textsf{R}_x^\alpha: [0,T)\times V\rightarrow \mathscr{P}$ be a map satisfying $\textsf{R}_x^\alpha(t,z)=\textsf{R}_x^\alpha(z)$, given as in (\ref{distribution}), where $\mathbf{w}=(w_{e_1},\cdots,w_{e_m})$ is replaced by ${\mathbf{w}}(t)$.
 By Step 1, we have
$$w_e^\prime(t)=\mathfrak{D}_e^\alpha(t)=\mathcal{D}_{KL}(\textsf{R}_x^\alpha(t,\cdot),\textsf{R}_y^\alpha(t,\cdot))+
\mathcal{D}_{KL}(\textsf{R}_y^\alpha(t,\cdot),\textsf{R}_x^\alpha(t,\cdot))\geq 0.$$
This gives the monotonicity of $w_e(t)$ with respect to $t\in[0,T)$.\\

{\it Step 3. Short time existence.}

Denote a vector valued function $\mathbf{F}:\mathbb{R}^m_+\rightarrow\mathbb{R}^m$ by
$$\mathbf{F}(\mathbf{w})=(F_1(\mathbf{w}),\cdots,F_m(\mathbf{w}))^\top,$$
where $\mathbb{R}^m_+=\{\mathbf{w}=(w_{1},\cdots,w_{m})\in\mathbb{R}^m: w_{j}>0,j=1,\cdots,m\}$ and
 $F_j(\mathbf{w})=\mathfrak{D}_{e_j}^\alpha$. It suffices to prove that $\mathbf{F}$ is locally Lipschitz in $\mathbb{R}^m_+$. To see this, we fix any
$\alpha\in(0,1)$ and $x\in V$. Then for any two vectors
$\mathbf{w},\widetilde{\mathbf{w}}\in \overline{\Omega} \subset\subset\mathbb{R}^m_+$, we take two positive $\alpha$-lazy random walks $\textsf{R}_x^\alpha$ and $\widetilde{\textsf{R}}_x^\alpha$ determined by $\mathbf{w}$ and $\widetilde{\mathbf{w}}\in \mathbb{R}^m_+$ respectively.
With no loss of generality, we assume $n_x\geq 2$ and $\epsilon\leq w_{e}\leq 1/\epsilon$ for all $e\in E$, where $\epsilon>0$ is a constant
depending only on the distance between $\overline\Omega$ and $\partial\mathbb{R}^m_+$. Obviously $\textsf{R}_x^\alpha(x)-\widetilde{\textsf{R}}_x^\alpha(x)=0$. While if $z\in N_j(x)$ for
$1\leq j\leq n_x$, then
\begin{eqnarray*}
|\textsf{R}_x^\alpha(z)-\widetilde{\textsf{R}}_x^\alpha(z)|&\leq&\sum_{u_1\in N_1(x),\cdots,u_{j-1}\in N_{j-1}(x)}\prod_{k=1}^j
\left|\frac{w_{u_{k-1}u_k}}{\sum_{u\in N_k(x)}w_{u_{k-1}u}}-\frac{\widetilde{w}_{u_{k-1}u_k}}{\sum_{u\in N_k(x)}\widetilde{w}_{u_{k-1}u}}\right|
\\[1.5ex]&\leq&C|\mathbf{w}-\widetilde{\mathbf{w}}|
\end{eqnarray*}
for some constant $C$ depending only on $m$, $n$ and $\epsilon$. Clearly there exists a constant $C$ depending only on $m$, $n$ and $\epsilon$
such that
$$1/C\leq \textsf{R}_x^\alpha(z), \widetilde{\textsf{R}}_{x}(z), \textsf{R}_y^\alpha(z), \widetilde{\textsf{R}}_{y}(z)\leq C,\quad \forall z\in V,\,\forall e=xy\in E.$$
It follows that
\begin{eqnarray*}
\left|\mathcal{D}_{KL}(\textsf{R}_x^\alpha,\textsf{R}_y^\alpha)-\mathcal{D}_{KL}(\widetilde{\textsf{R}}_x^\alpha,
\widetilde{\textsf{R}}_y^\alpha)\right|&\leq&
\sum_{z\in V}\left|\textsf{R}_x^\alpha(z)\log\frac{\textsf{R}_x^\alpha(z)}{\textsf{R}_y^\alpha(z)}-
\widetilde{\textsf{R}}_x^\alpha(z)\log\frac{\widetilde{\textsf{R}}_x^\alpha(z)}{\widetilde{\textsf{R}}_y^\alpha(z)}\right|\\[1.5ex]
&\leq&C\sum_{z\in V}\left(|\textsf{R}_x^\alpha(z)-\widetilde{\textsf{R}}_x^\alpha(z)|+|\textsf{R}_y^\alpha(z)-\widetilde{\textsf{R}}_y^\alpha(z)|\right)\\[1.5ex]
&\leq&C|\mathbf{w}-\widetilde{\mathbf{w}}|.
\end{eqnarray*}
In the same way, $$|\mathcal{D}_{KL}(\textsf{R}_y^\alpha,\textsf{R}_x^\alpha)-\mathcal{D}_{KL}(\widetilde{\textsf{R}}_y^\alpha,\widetilde{\textsf{R}}_x^\alpha)|\leq
C|\mathbf{w}-\widetilde{\mathbf{w}}|.$$
Hence
$$|F_j(\mathbf{w})-F_j(\widetilde{\mathbf{w}})|\leq C|\mathbf{w}-\widetilde{\mathbf{w}}|, \quad\forall 1\leq j\leq m,$$
and thus $\mathbf{F}(\mathbf{w})$ is locally Lipschitz in $\mathbb{R}^m_+$.
Then we conclude from the ODE theory that the entropy flow (\ref{entropy-flow}) has a unique solution on $[0,T_0]$ for some $T_0>0$. \\

{\it Step 4. Long time existence.}

Based on the third step, we set
$$T_\infty=\sup\{T>0: (\ref{entropy-flow})\,{\rm has\,a\,unique\,solution\,on}\, [0,T] \}.$$
It suffices to prove $T_\infty=+\infty$. For otherwise, we may assume $T_\infty<+\infty$. Let $\mathbf{w}(t)$ be a unique solution of
$$\left\{\begin{array}{lll}
\mathbf{w}^\prime(t)=\mathbf{F}(\mathbf{w}(t)),\,\,t\in[0,T_\infty)\\[1.5ex]
{w_e}(t)>0,\,\forall e\in E,\, t\in[0,T_\infty)\\[1.5ex]
{w_e}(0)={w}_{0,e},\,\forall e\in E.
\end{array}\right.$$
We first observe that for each $e=xy\in E$, the fact $w_e^\prime(t)=\mathfrak{D}_e^\alpha(t)\geq 0$ implies
$$w_e(t)\geq w_{0,e}>0,\quad \forall t\in[0,T_\infty).$$
Now we distinguish three cases to estimate the entropy $\mathfrak{D}_{e}^\alpha$ as follows:\\

$(i)$ If $z\in N_0(y)$, there holds
\begin{eqnarray*}
\textsf{R}_x^\alpha(z)\log\frac{\textsf{R}_x^\alpha(z)}{\textsf{R}_y^\alpha(z)}\leq \log\frac{1}{\textsf{R}_y^\alpha(z)}
=\log\frac{1}{\alpha},
\end{eqnarray*}
since $\textsf{R}_x^\alpha(z), \textsf{R}_y^\alpha(z)$ are probability measures satisfying
$$0<\textsf{R}_x^\alpha(z)<1,\,\, 0<\textsf{R}_y^\alpha(z)<1;$$

$(ii)$ If $z\in N_j(y)$ with $1\leq j\leq n_y-1$, recalling the definition of $N_j(y)$ (the $j$th step
neighbor set centered at the node $y$), one has
\begin{eqnarray*}
\textsf{R}_x^\alpha(z)\log\frac{\textsf{R}_x^\alpha(z)}{\textsf{R}_y^\alpha(z)}&\leq&\log\frac{1}{\textsf{R}_y^\alpha(z)}\\
&=&\log\frac{1}{(1-\alpha)^j\alpha}+\log\frac{1}{\sum_{u_1\in N_1(y),\cdots,u_{j-1}\in N_{j-1}(y)}
\prod_{k=1}^j\frac{w_{u_{k-1}u_k}}{\sum_{u\in N_k(y)}w_{u_{k-1}u}}}\\
&\leq&\log\frac{1}{(1-\alpha)^j\alpha}+\log\frac{1}{\prod_{k=1}^j\frac{w_{u_{k-1}u_k}}{\sum_{u\in N_k(y)}w_{u_{k-1}u}}}\\
&\leq&\log\frac{1}{(1-\alpha)^j\alpha}+\sum_{k=1}^j\log\frac{\sum_{u\in N_k(y)}w_{u_{k-1}u}}{w_{u_{k-1}u_k}}\\
&\leq&\log\frac{1}{(1-\alpha)^j\alpha}+j\log\frac{\sum_{\tau\in E}w_\tau}{\min_{\tau\in E} w_{0,\tau}},
\end{eqnarray*}
where the second inequality holds for any fixed $u_1\in N_1(y),\cdots,u_{j-1}\in N_{j-1}(y)$.

$(iii)$ If
$z\in N_{n_y}(y)$, we have
$$\textsf{R}_x^\alpha(z)\log\frac{\textsf{R}_x^\alpha(z)}{\textsf{R}_y^\alpha(z)}\leq
\log\frac{1}{(1-\alpha)^{n_y}}+n_y\log\frac{\sum_{\tau\in E}w_\tau}{\min_{\tau\in E} w_{0,\tau}}.$$
Combining $(i)$, $(ii)$ and $(iii)$, we find some constant $C$ depending only on $m$, $n$ and $\alpha$ such that
\begin{eqnarray*}\mathcal{D}_{KL}(\textsf{R}_x^\alpha,\textsf{R}_y^\alpha)&=&\sum_{z\in V}\textsf{R}_x^\alpha(z)\log\frac{\textsf{R}_x^\alpha(z)}{\textsf{R}_y^\alpha(z)}\\
&\leq&C\left(1+\log \sum_{\tau\in E}w_\tau\right).\end{eqnarray*}
In the same way,
$$\mathcal{D}_{KL}(\textsf{R}_y^\alpha,\textsf{R}_x^\alpha)\leq C\left(1+\log \sum_{\tau\in E}w_\tau\right).$$
Hence
$$\mathfrak{D}_e^\alpha=\mathcal{D}_{KL}(\textsf{R}_x^\alpha,\textsf{R}_y^\alpha)+\mathcal{D}_{KL}(\textsf{R}_y^\alpha,\textsf{R}_x^\alpha)\leq
C\left(1+\log \sum_{\tau\in E}w_\tau\right).$$
Coming back to (\ref{entropy-flow}), we obtain
$$\frac{d}{dt}\left(\sum_{\tau\in E}w_\tau\right)\leq C\left(1+\log\sum_{\tau\in E}w_\tau\right)\leq
C\left(1+\sum_{\tau\in E}w_\tau\right).$$
It follows that
$$0<\min_{\tau\in E}w_{0,\tau}\leq w_e(t)\leq\sum_{\tau\in E}w_\tau(t)\leq Ce^{Ct}.$$
Thus $\mathbf{w}(t)$ is bounded in $\mathbb{R}^m_+$ with respect to $t\in [0,T_\infty)$, and $w_\tau(t)\geq w_{0,\tau}>0$ for all $t\in[0,T_\infty)$. By the ODE theory, solution $\mathbf{w}(t)$ can be extended to $[0,T_1)$ for some $T_1>T_\infty$.
This contradicts the definition of $T_\infty$, and confirms the long time existence of $\mathbf{w}(t)$.

\vspace{0.3cm}
{\it Step 5. Convergence.}

We still have the second part of the theorem  (convergence of solutions) left. By Steps 2 and 4, for each edge $\tau\in E$, $w_\tau(t)$ is increasing in $t\in[0,+\infty)$. Hence there holds for each $\tau\in E$,
\begin{equation}\label{w-infinity}\lim_{t\rightarrow +\infty}w_\tau(t)=+\infty\end{equation}
or there exists a positive constant $w_\tau^\ast$ such that \begin{equation}\label{w-bdd}\lim_{t\rightarrow+\infty}w_\tau(t)=w_\tau^\ast.\end{equation}
We now exclude the possibility that there exist two edges $\tau_1$ and $\tau_2$ satisfying (\ref{w-infinity}) and (\ref{w-bdd}) respectively.
To see this, without loss of generality, we assume $\tau_1=yz$ and $\tau_2=xy$ are adjacent, $w_{\tau_1}(t)\rightarrow+\infty$ and
$w_{\tau_2}(t)\rightarrow w_{\tau_2}^\ast$ as $t\rightarrow+\infty$.
 Since
$\textsf{R}_x^\alpha(x,t)=\alpha$ and
$$\textsf{R}_y^\alpha(x,t)=\left\{\begin{array}{lll}(1-\alpha)\frac{w_{xy}(t)}{\sum_{u\in N_1(y)}w_{yu}(t)}&{\rm if}&
x\not\sim N_2(y)\\[1.5ex](1-\alpha)\alpha\frac{w_{xy}(t)}{\sum_{u\in N_1(y)}w_{yu}(t)}&{\rm if}&
x\sim N_2(y),\end{array}
\right.$$
we have by $w_{yz}(t)\rightarrow+\infty$ that
\begin{equation}\label{tend-0}
\lim_{t\rightarrow+\infty}\textsf{R}_y^\alpha(x,t)=0.
\end{equation}
By an elementary inequality $a\log a\geq -e^{-1}$, $\forall a>0$,
a calculation shows
\begin{eqnarray}\nonumber
\mathfrak{D}_{\tau_2}(t)&=&\sum_{u\in V}\textsf{R}_x^\alpha(u,t)\log\frac{\textsf{R}_x^\alpha(u,t)}{\textsf{R}_y^\alpha(u,t)}+
\sum_{u\in V}\textsf{R}_y^\alpha(u,t)\log\frac{\textsf{R}_y^\alpha(u,t)}{\textsf{R}_x^\alpha(u,t)}\\\nonumber
&=&\textsf{R}_x^\alpha(x,t)\log\frac{\textsf{R}_x^\alpha(x,t)}{\textsf{R}_y^\alpha(x,t)}+\sum_{u\in V\setminus\{x\}}\textsf{R}_x^\alpha(u,t)\log\frac{\textsf{R}_x^\alpha(u,t)}{\textsf{R}_y^\alpha(u,t)}+\sum_{u\in V}\textsf{R}_y^\alpha(u,t)\log\frac{\textsf{R}_y^\alpha(u,t)}{\textsf{R}_x^\alpha(u,t)}\\\label{estim}
&\geq& \alpha\log\frac{\alpha}{\textsf{R}_y^\alpha(x,t)}-2(n-1)e^{-1}.
\end{eqnarray}
Combining (\ref{tend-0}), (\ref{estim}) and (\ref{entropy-flow}), we obtain
$$w_{\tau_2}^\prime(t)=\mathfrak{D}_{\tau_2}^\alpha(t)\rightarrow+\infty.$$
Hence there exists $T^\ast>0$ such that for all $t\geq T^\ast$, $w_{\tau_2}^\prime(t)>1$, and thus
$$w_{\tau_2}(t)-w_{\tau_2}(T^\ast)>t-T^\ast.$$
This contradicts $w_{\tau_2}(t)\rightarrow w_{\tau_2}^\ast$. Therefore either $(i)$ $w_\tau$ satisfies (\ref{w-infinity}) for all $\tau\in E$,
or $(ii)$ $w_\tau$ satisfies (\ref{w-bdd}) for all $\tau\in E$.

We still need to prove that under condition $(ii)$, there holds for each $\tau\in E$,
\begin{equation}\label{zero}
\lim_{t\rightarrow+\infty}\mathfrak{D}_\tau(t)=\mathfrak{D}_\tau^\ast=0,
\end{equation}
where $\mathfrak{D}_\tau^\ast$ stands for the entropy on the edge $\tau$ with respect to the weight
$\mathbf{w}^\ast=(w_{\tau_1}^\ast,\cdots,w_{\tau_m}^\ast)$. In view of Definition A, it follows from
$w_\tau(t)\in C^1[0,+\infty)$ and $w_\tau(t)\rightarrow w_\tau^\ast$ that $\mathfrak{D}_\tau(t)\rightarrow\mathfrak{D}_\tau^\ast$ as
$t\rightarrow+\infty$. Since $\mathfrak{D}_\tau^\ast\geq 0$, it suffices to prove $\mathfrak{D}_\tau^\ast>0$ does not hold. Suppose
$\mathfrak{D}_\tau^\ast>0$ for some $\tau\in E$. Then there exists some $T^{\ast\ast}>0$ such that for all $t\geq T^{\ast\ast}$,
$$w_\tau^\prime=\mathfrak{D}_\tau^\alpha(t)\geq \frac{1}{2}\mathfrak{D}_\tau^\ast.$$
This leads to
$$w_\tau(t)=w_\tau(T^{\ast\ast})+\int_{T^{\ast\ast}}^t\mathfrak{D}_\tau^\alpha(s)ds\geq \frac{1}{2}(t-T^{\ast\ast})\mathfrak{D}_\tau^\ast,$$
contradicting $w_\tau(t)\rightarrow w_\tau^\ast$ as $t\rightarrow+\infty$. As a consequence, (\ref{zero}) holds, as we desired. $\hfill\Box$\\

{\it Proof of Corollary \ref{Corollary}.} Let $\mathbf{w}(t)=(w_{xy}(t),w_{yz}(t), w_{xz}(t))$ be the unique global solution of the entropy flow.  Suppose there exists an edge $\tau$
such that $w_\tau(t)$ is bounded with respect to $t$. Then by Theorem \ref{existence}, $\mathbf{w}(t)
\rightarrow \mathbf{w}^\ast=(w_{xy}^\ast,w_{yz}^\ast,w_{xz}^\ast)$ and
$\mathbf{\mathfrak{D}}(t)=(\mathfrak{D}_{xy}^\alpha(t),\mathfrak{D}_{yz}^\alpha(t),\cdots,\mathfrak{D}_{xz}^\alpha(t))\rightarrow \mathbf{\mathfrak{D}}^\ast=
(\mathfrak{D}_{xy}^\ast,\mathfrak{D}_{yz}^\ast,\mathfrak{D}_{xz}^\ast)=(0,0,0)$ as $t\rightarrow+\infty$, where each $\mathfrak{D}_e^\ast$ is the entropy on edge $e$ with respect to the weight $\mathbf{w}^\ast$. Clearly,
$\alpha$-lazy outward random walks with respect to the limit weight $\mathbf{w}^\ast$ are represented by
\begin{eqnarray*}
\textsf{R}_x^\ast&=&\left(\alpha,(1-\alpha)\frac{w_{xy}^\ast}{w_{xy}^\ast+w_{xz}^\ast},
(1-\alpha)\frac{w_{xz}^\ast}{w_{xy}^\ast+w_{xz}^\ast}\right),\\
\textsf{R}_y^\ast&=&\left((1-\alpha)\frac{w_{xy}^\ast}{w_{xy}^\ast+w_{yz}^\ast},\alpha,(1-\alpha)\frac{w_{yz}^\ast}{w_{xy}^\ast+w_{yz}^\ast}\right),\\
\textsf{R}_z^\ast&=&\left((1-\alpha)\frac{w_{xz}^\ast}{w_{xz}^\ast+w_{yz}^\ast},(1-\alpha)\frac{w_{yz}^\ast}{w_{xz}^\ast+w_{yz}^\ast},\alpha\right).
\end{eqnarray*}
Note that $\mathfrak{D}^\ast=(0,0,0)$ if and only if $\textsf{R}_x^\ast=\textsf{R}_y^\ast=\textsf{R}_z^\ast$, which is equivalent to
$$w_{xy}^\ast=w_{xz}^\ast=w_{yz}^\ast,\,\alpha=1/3.$$
Hence, if $\alpha\not=1/3$, $w_\tau(t)\rightarrow +\infty$ for all $\tau\in E$ as $t\rightarrow+\infty$. $\hfill\Box$

\section{Applications}\label{application}
In this section, we demonstrate the application of the entropy flow (\ref{entropy-flow}) to community detection on real-world networks. In Subsection \ref{Algorithm},  we present the algorithm for the entropy flow; In Subsection \ref{experiment},
we introduce the experimental setup, including benchmark datasets and evaluation metrics; In Subsection \ref{iterations}, we investigate
how the number of iterations influences the performance of the entropy flow algorithm;
In Subsection \ref{entropydistribution}, we examine the distributions of edge entropy and edge weight before and after the entropy flow; In Subsection \ref{threshold}, we examine how varying the surgery threshold affects community structure through entropy flow; In Subsection \ref{comparison}, we compare our method with classical community detection algorithms and the Ricci curvature flow algorithms; Finally, in Subsection \ref{cost}, we evaluate the computational time of entropy flow in comparison with Ricci flow.

\subsection{Algorithm design}\label{Algorithm}

Let $G=(V,E,\mathbf{w}_0)$ be a connected weighted finite graph,
where $V$ is the vertex set, $E$ is the edge set, and $\mathbf{w}_0=(w_{0,e_1},\cdots,w_{0,e_m})$ is the weight on $E$.
We now discretize the entropy flow (\ref{entropy-flow}).
Take $s>0$ as a step size, $t_j=js$, $j=0,1,\cdots$. Let $\mathfrak{D}_e^\alpha(0)$ be the entropy defined as in
(\ref{entropy-alpha}) with respect to $\alpha\in(0,1)$ and the initial weight $\mathbf{w}_0$,
and $w_e(t_1)=w_e(0)+s\mathfrak{D}_e^\alpha(0)$. By induction, if we assume $\mathfrak{D}_e^\alpha(t_{j-1})$ is the entropy
with respect to the weight $w_e(t_{j-1})$, then $w_e(t_j)=w_e(t_{j-1})+s\mathfrak{D}_e^\alpha(t_{j-1})$. To sum up, we have
for all $e=xy\in E$ and $j=1,2,\cdots$,
\begin{equation}\label{weight-j}w_e(t_j)=w_{0,e}+s\sum_{k=1}^{j-1}\mathfrak{D}_e^\alpha(t_k),\end{equation}
and
\begin{equation}\label{entropy-j}\mathfrak{D}_e^\alpha(t_{j-1})=\mathcal{D}_{KL}(\textsf{R}_x^\alpha(t_{j-1}),\textsf{R}_y^\alpha(t_{j-1}))+
\mathcal{D}_{KL}(\textsf{R}_y^\alpha(t_{j-1}),\textsf{R}_x^\alpha(t_{j-1})).\end{equation}
If $G$ is not connected, then it is a union of connected components $G_1,G_2,\cdots,G_\ell$ for some integer $\ell\geq 2$.
Noting that the operations (\ref{weight-j}) and (\ref{entropy-j}) can be done simultaneously in all connected components $G_1$, $G_2$, $\cdots$,
$G_\ell$, one need not check whether the initial graph is connected or not  before the process of entropy flow iterations.
The pseudo-code for entropy flow  is as follows.\\

\begin{algorithm}[H]
\caption{Community detection using entropy flow}
\label{alg:entropy-flow}
\KwIn{An undirected connected finite graph $G=(V,E)$; initial weight $\mathbf{w}_0$; parameter $\alpha\in(0,1)$; maximum number of iterations $N$; step size $s$.}
\KwOut{Connected components of $G$.}

\For{$j=1,2,\dots,N$}{
Compute the $\alpha$-lazy outward random walk $\textsf{R}_x^\alpha(t_{j-1})$ for all $x\in V$\;
Calculate the entropy $\mathfrak{D}_e^\alpha(t_{j-1})$ for all edges $e\in E$\;
Update $w_e(t_{j-1})$ to $w_e(t_j)$ for all $e\in E$ according to (\ref{weight-j})
}

\For{$\text{cutoff}=\max_{\tau\in E}w_{\tau},\cdots,\min_{\tau\in E}w_{\tau}$}{
    \For{each $e\in E$}{
        \If{$w_e > \text{cutoff}$}{
            remove edge $e$ from $E$\;
        }
    }
    Calculate community detection metrics for the post-surgery graph\;
}

\Return Best community detection metrics of $G$.
\end{algorithm}

Let us analyze the computational complexity of the entropy flow algorithm.
The computational cost of the entropy flow algorithm primarily comes from constructing the
$\alpha$-lazy outward random walks and computing the entropy for each edge. Constructing the random walk distributions for all vertices has a time complexity of $O(|V||E|)$.
In addition, computing the entropy for each edge involves summation over the entire vertex set,
which incurs an additional cost of $O(|V||E|)$. Therefore, the overall computational complexity of the entropy flow algorithm is $O(|V||E|)$.

In comparison, the computational bottleneck of the discrete Ricci curvature flow lies in solving Wasserstein distances via linear programming and computing shortest paths in the graph, with a complexity of $O(|E|D^3)$ per edge, where $D$ is the average degree of the graph.
By avoiding the solution of Wasserstein distances and linear programs, the entropy flow provides a simpler and more numerically stable implementation.

\subsection{Experimental setup}\label{experiment}
We evaluate the entropy flow algorithm on three widely used real-world networks.
The Karate network \cite{Zachary1977} is a small social network representing 34 members of a university karate club, which naturally split into two communities due to a conflict between the instructor and the club president.
The Football network represents the schedule of the 2000 NCAA Division I college football season \cite{Girvan2002community}. It consists of 115 teams as nodes, where an edge indicates that two teams played a game during the season. Due to constraints such as geography and broadcasting, teams are organized into 12 conferences.
The Facebook network is a larger and more complex social network dataset \cite{Leskovec2014SNAP}.
Each node represents a user and edges denote mutual friendships within a single ego network.
Users manually organize their friends into social circles, which serve as the ground-truth communities for this dataset.

To assess the quality of the detected communities, we adopt three standard evaluation metrics: Adjusted Rand Index (ARI), Normalized Mutual Information (NMI), and Modularity (Q) \cite{Hubert1985comparing, Danon2005, Newman2018networks}.
ARI assesses community detection accuracy by counting pairs of vertices that are
consistently assigned, while NMI measures the mutual dependence between detected
communities and ground-truth labels from an information theoretic viewpoint.
Modularity evaluates the strength of community structure based solely on network topology.
These metrics are widely used in the community detection literature and are consistent with those employed in previous studies, enabling direct comparison with existing methods.

\subsection{Effect of iterations on experimental results}\label{iterations}
We now investigate how the number of iterations influences the performance of the entropy flow algorithm.
In this set of experiments, we fix $\alpha = 0.5$ for all datasets and vary the number of iterations to examine the evolution of clustering quality.
This choice of $\alpha$ is based on extensive preliminary experiments: we observed that values in the range $0.4$--$0.6$ yield similar results across all datasets, while setting $\alpha$ too small or too large may slightly degrade performance on some networks. Therefore, $\alpha = 0.5$ represents a reasonable compromise for consistent evaluation.

For the Karate network, we set the step size to $s = 0.1$, while varying the number of iterations from 0 to 50 (Figure~\ref{fig8}(a)).
At iteration 0, the clustering performance is relatively poor (ARI = 0.38, NMI = 0.49, Modularity = 0.67), reflecting the limited community separability of the original graph.
As the number of iterations increases, all three metrics improve rapidly in the early stage, with a notable gain observed within the first 5 iterations.
Further increasing the iterations leads to a clear improvement around 20 iterations, where ARI and NMI rise to approximately 0.77 and 0.68, respectively.
Beyond this point, the metrics remain stable up to 50 iterations.

For the Football network, we set the step size to $s = 0.01$, while varying the number of iterations from 0 to 50 (Figure~\ref{fig8}(b)).
Even without applying entropy flow (iteration 0), the clustering performance is already high, with ARI = 0.89 and NMI = 0.93, reflecting the clear community structure of the network.
As the number of iterations increases, ARI and NMI exhibit a slight improvement and reach their peak within the first 5-10 iterations.
Afterwards, all three metrics remain highly stable, with ARI around 0.91, NMI around 0.93, and Modularity around 0.90 up to 50 iterations.
These results indicate that the entropy flow method is robust to the choice of iteration number.

For the Facebook network, we set the step size to $s = 0.01$, while varying the number of iterations from 0 to 50 (Figure~\ref{fig8}(c)).
The initial clustering performance is moderate (ARI = 0.69, NMI = 0.71, Modularity = 0.93). Within the first 5 iterations, all metrics improve slightly (ARI = 0.69, NMI = 0.72, Modularity = 0.96), indicating initial refinement of the community structure. From 10 to 30 iterations, the metrics fluctuate minimally, suggesting stable performance. Beyond 35 iterations, ARI and NMI gradually increase, reaching their peak at iteration 50 (ARI = 0.72, NMI = 0.73, Modularity = 0.93). Overall, these results show that the entropy flow steadily enhances community structure in the Facebook network, with slight long-term improvements over extended iterations.

\begin{figure}[H]
    \centering
    \begin{subfigure}{0.49\textwidth}
        \centering
        \includegraphics[width=\textwidth]{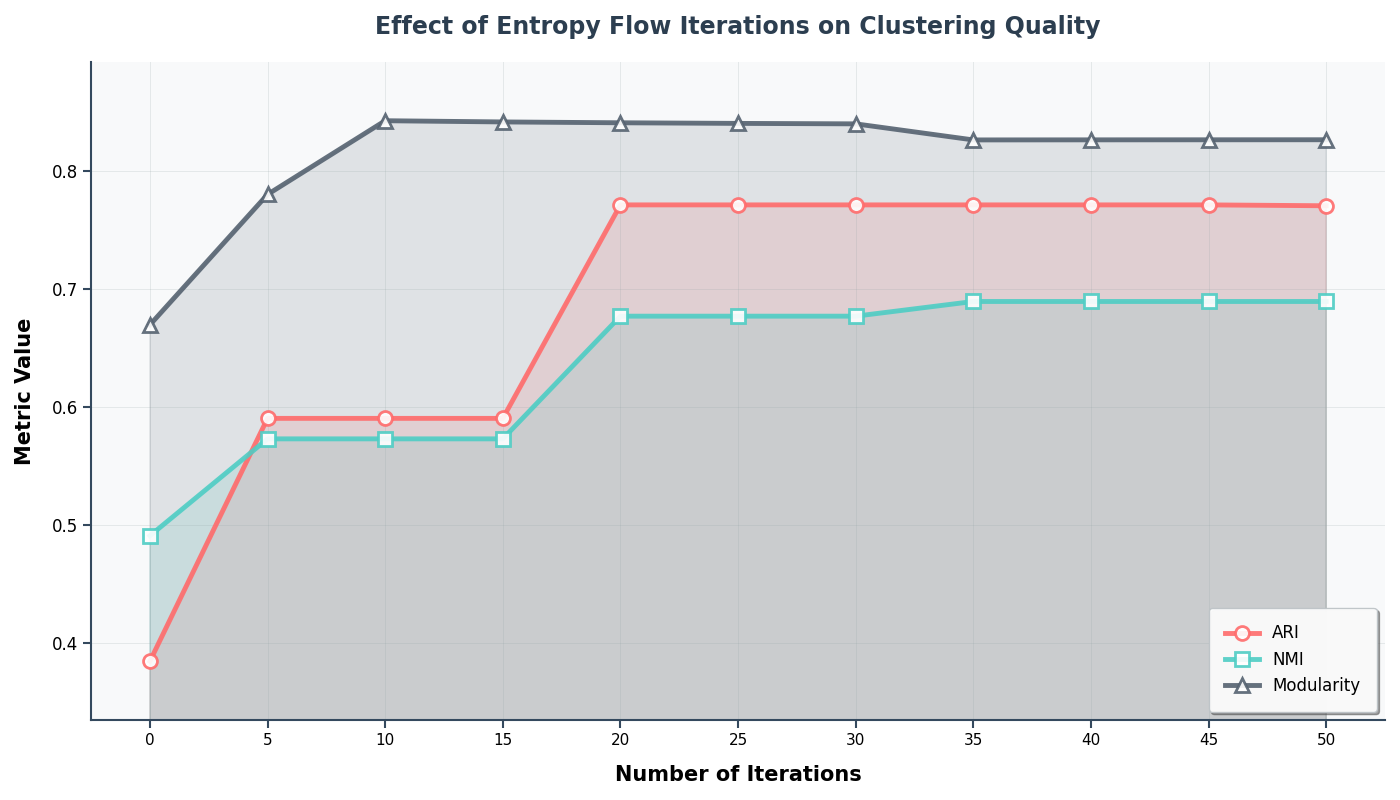}
        \caption{Karate}
    \end{subfigure}
    \hfill
    \begin{subfigure}{0.49\textwidth}
        \centering
        \includegraphics[width=\textwidth]{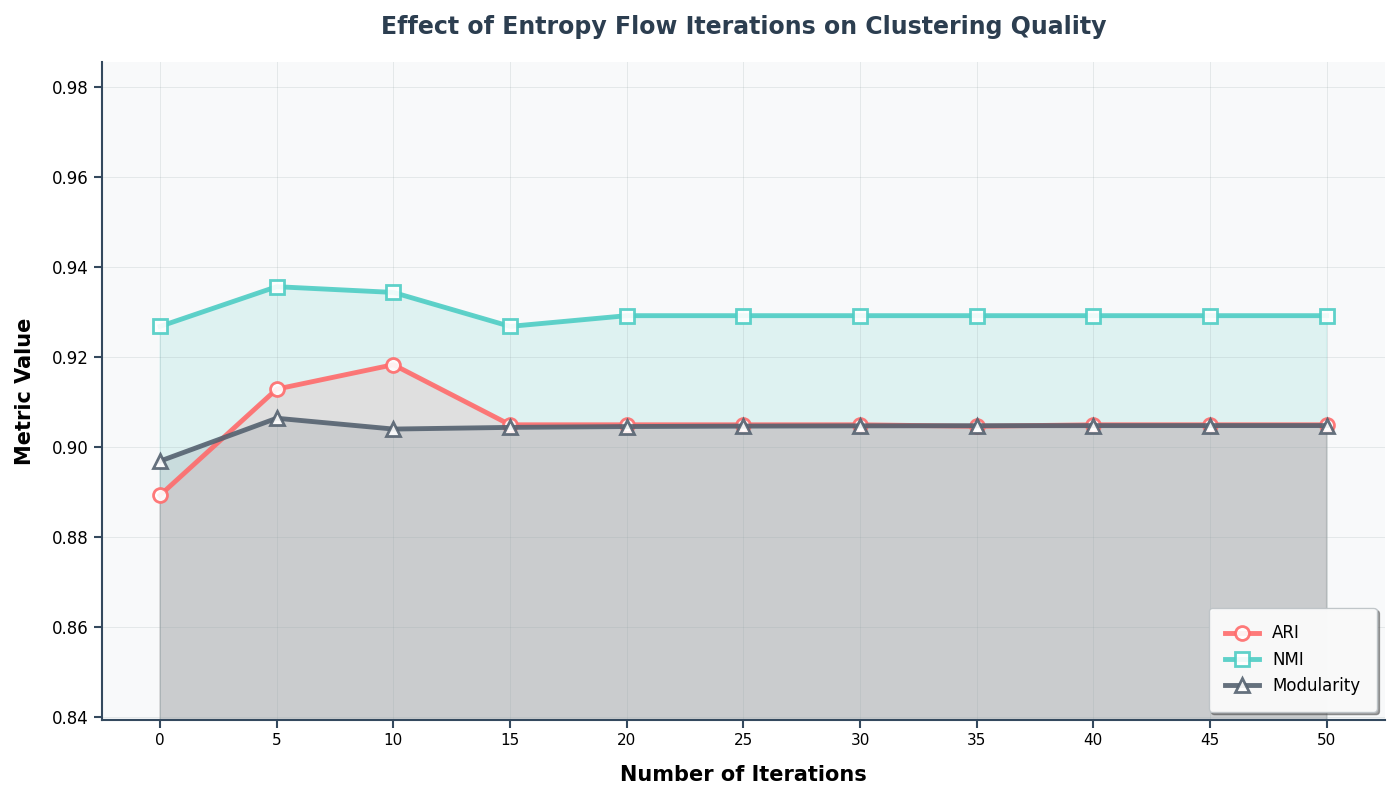}
        \caption{Football}
    \end{subfigure}
    \hfill
    \begin{subfigure}{0.49\textwidth}
        \centering
        \includegraphics[width=\textwidth]{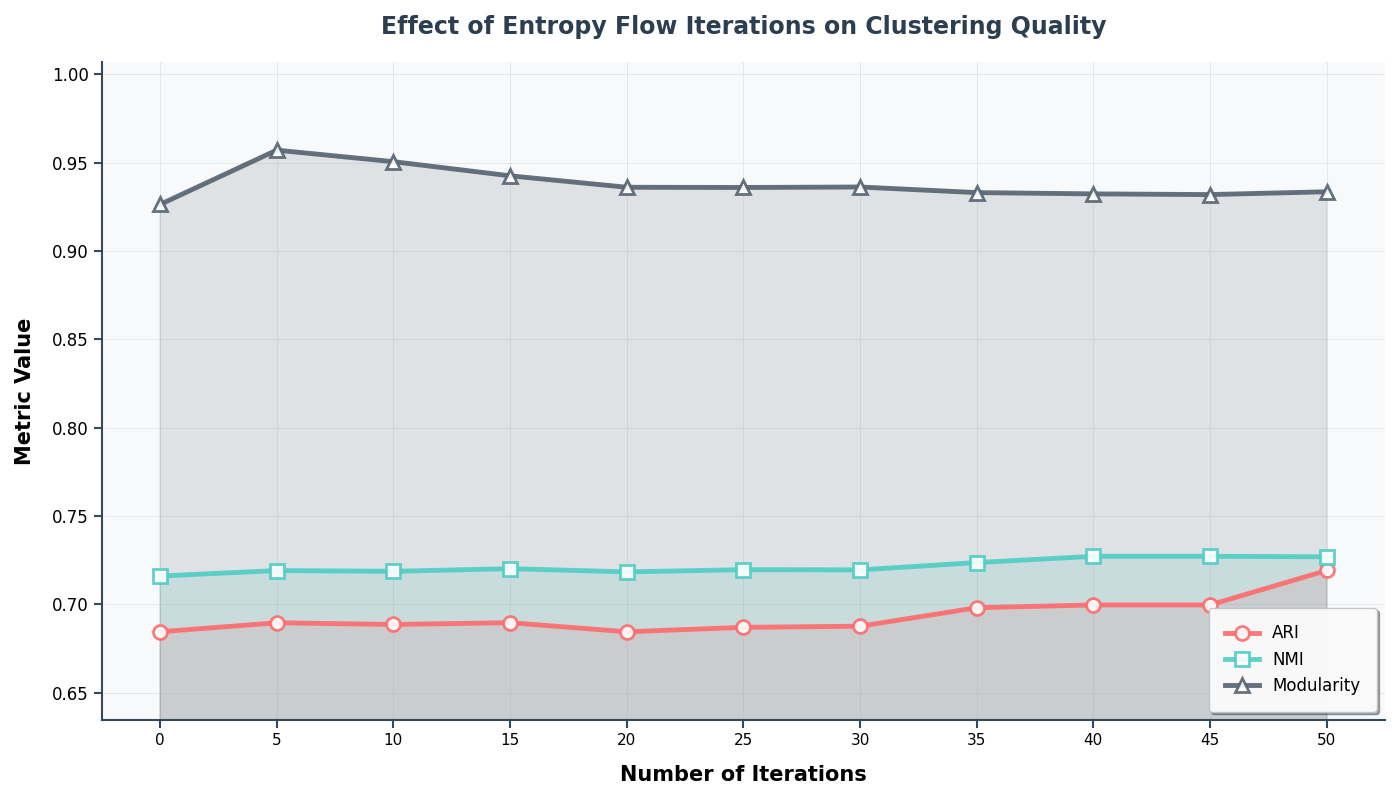}
        \caption{Facebook}
    \end{subfigure}

    \caption{Effect of iterations on three real-world datasets}
    \label{fig8}
\end{figure}

\subsection{Analysis of edge entropy and weight distributions}
\label{entropydistribution}
We investigate how the entropy flow algorithm affects the distributions of edge entropy and edge weights.
Figure~\ref{entropy-weight} illustrates the changes in edge entropy and edge weight distributions in three networks before and after  the entropy flow algorithm.
For all three networks, the edge entropy distributions are initially broad, spanning a wide range of values. In the Karate network, the distribution becomes concentrated at lower values after the flow. In the Football network, it shifts mainly to low values, while in the Facebook network, the distribution slightly shifts toward lower values and becomes more concentrated.
Edge weight distributions are initially dominated by low weight edges. In the Karate network, most edges have small weights with a few higher weight edges; after the flow, the distribution shifts toward higher weights and slightly broadens. In the Football network, edges initially concentrate on the lower range, and some move to higher weights after the flow, with a slightly wider distribution. In the Facebook network, most edges initially have low weights, and after the flow, the distribution becomes slightly more concentrated with a few edges increasing in weight.
Overall, the edge entropy distributions become more concentrated and the edge weight distributions shift toward higher or more concentrated values. The entropy flow sharpens edge weight differentiation, resulting in networks with more pronounced structure and clearer community organization.

\begin{figure}[H]
    \centering
    \begin{subfigure}{0.87\textwidth}
        \centering
        \includegraphics[width=\textwidth]{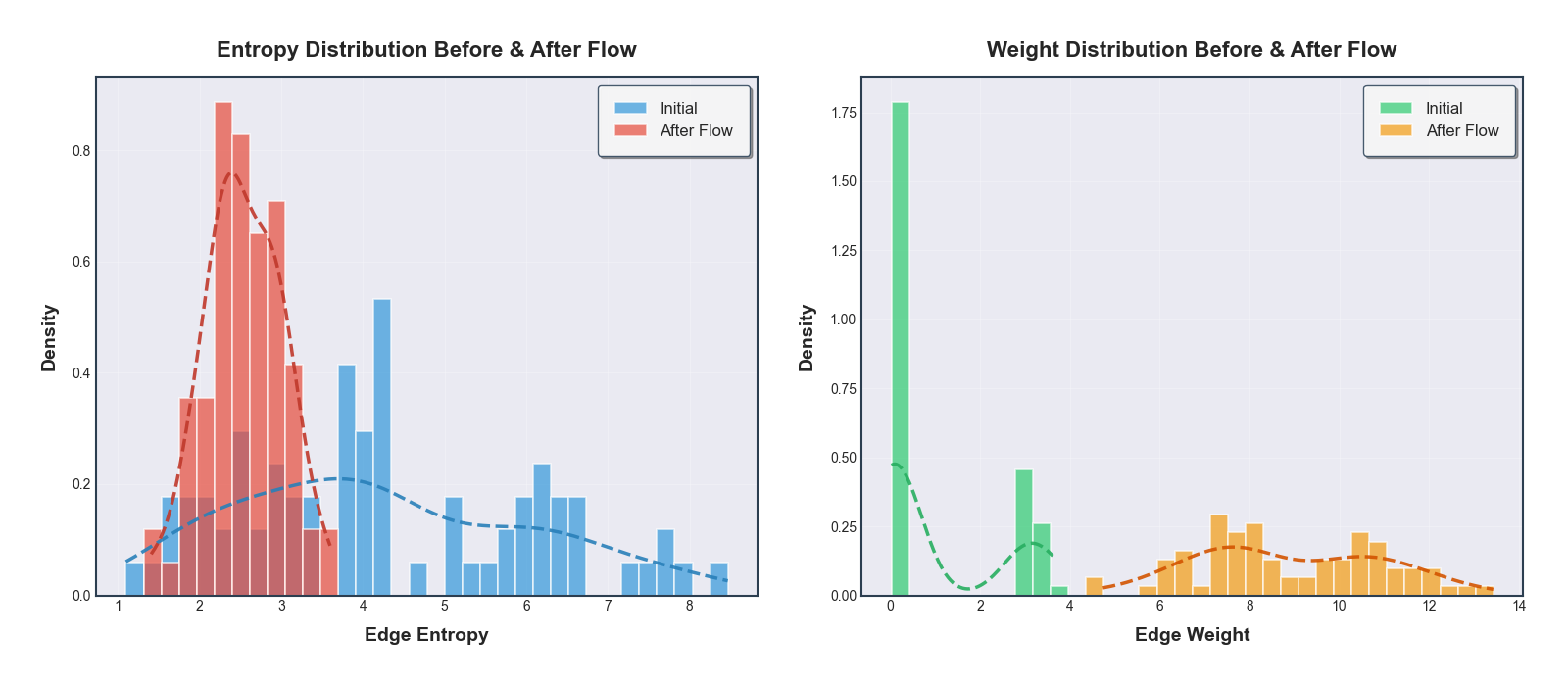}
        \caption{Karate}
        \label{fig10a}
    \end{subfigure}
    \hfill
    \begin{subfigure}{0.87\textwidth}
        \centering
        \includegraphics[width=\textwidth]{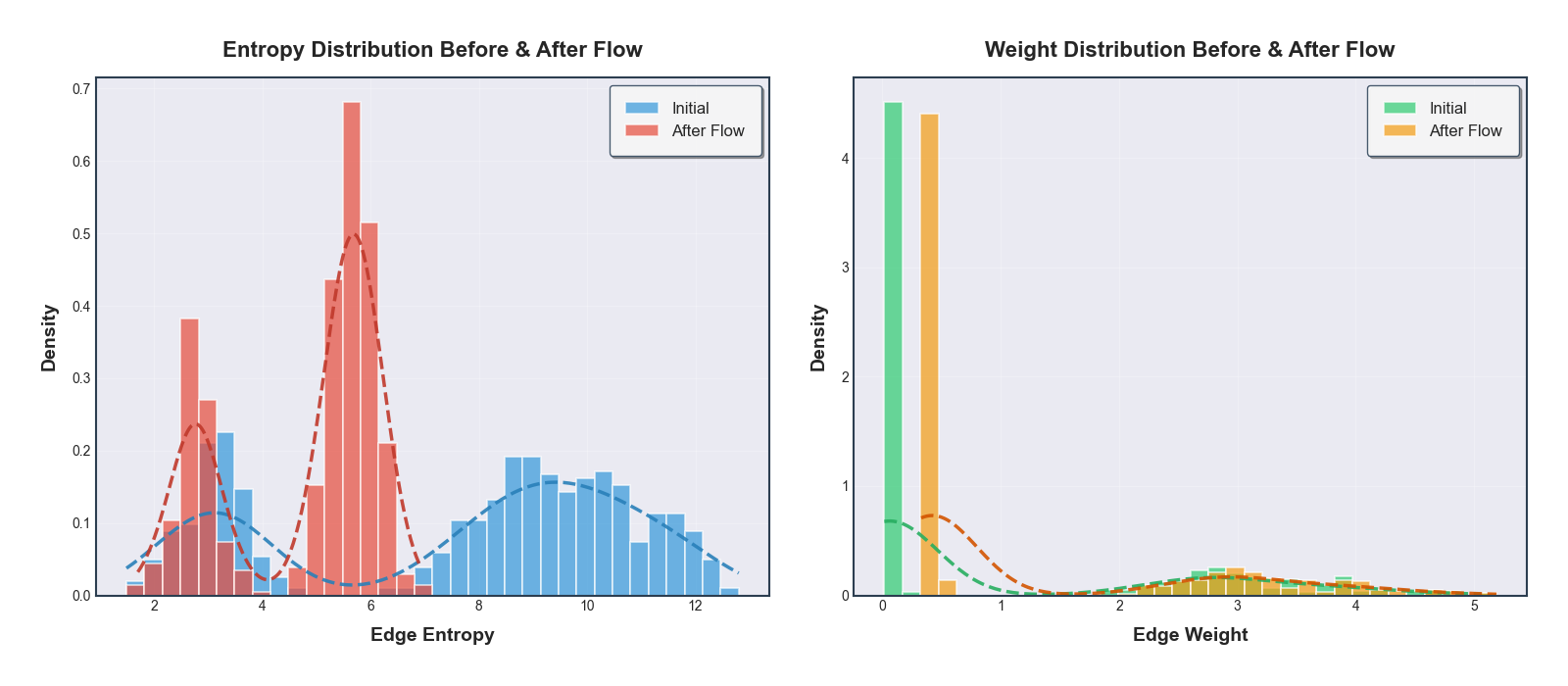}
        \caption{Football}
        \label{fig10b}
    \end{subfigure}
    \hfill
    \begin{subfigure}{0.87\textwidth}
        \centering
        \includegraphics[width=\textwidth]{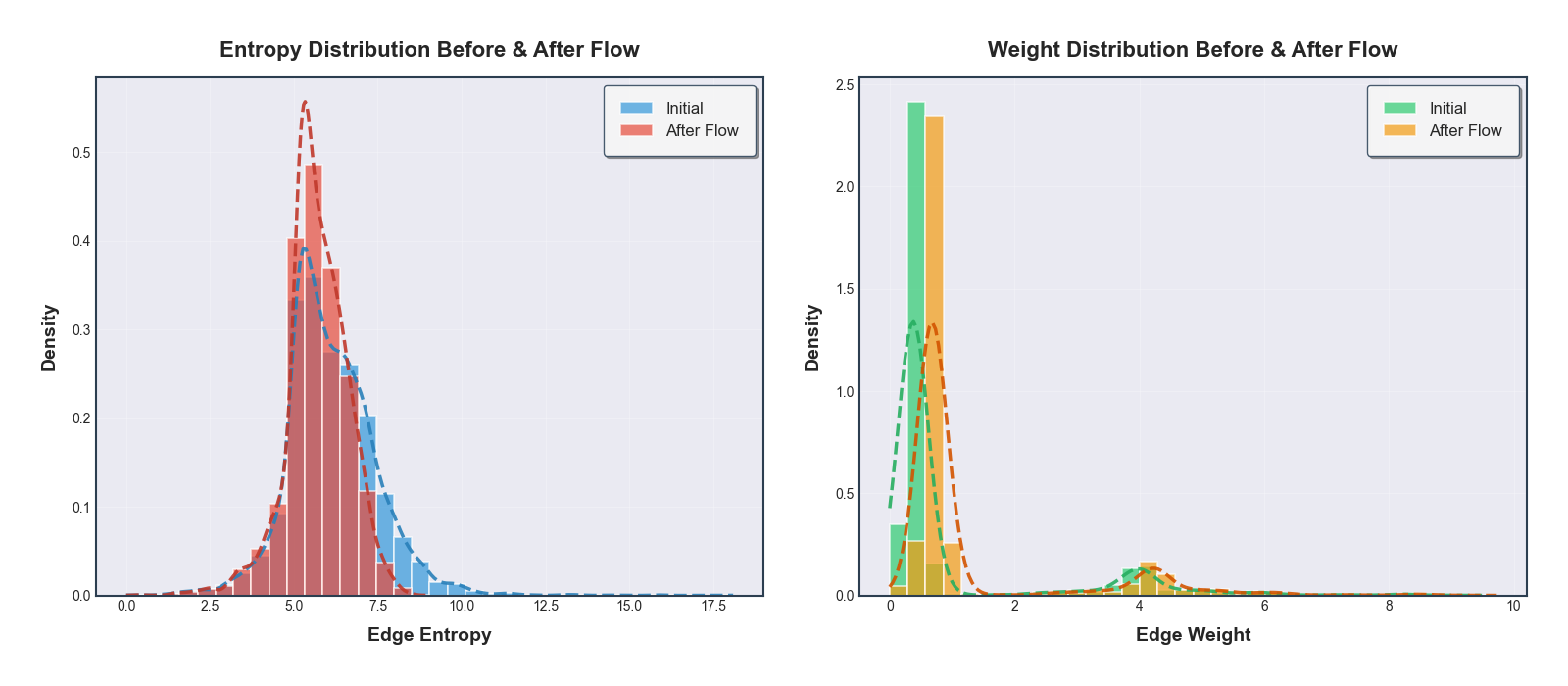}
        \caption{Facebook}
        \label{fig10c}
    \end{subfigure}
    \caption{Distribution changes of entropy values and edge weights}
    \label{entropy-weight}
\end{figure}

\subsection{Effect of surgery thresholds on experimental results}\label{threshold}
Now, we examine how varying the surgery threshold (minimal edge weight deleted) affects community structure through entropy flow. As shown in Figure~\ref{fig9}, across the Karate, Football, and Facebook networks, the metrics exhibit similar dynamic patterns with varying
surgery thresholds. At high surgery thresholds, metric values are low, indicating that removing only a few edges does not yet reveal the community structure. As the threshold decreases, Modularity rises sharply and stabilizes, reflecting stronger internal connectivity and weaker external links. ARI and NMI also increase, showing improved alignment with the ground truth and higher clustering accuracy. When the cutoff approaches the minimum edge weight, all metrics drop toward zero, as nearly all edges are removed and meaningful community partitioning becomes impossible.

\begin{figure}[H]
    \centering
    \begin{subfigure}{0.49\textwidth}
        \centering
        \includegraphics[width=\textwidth]{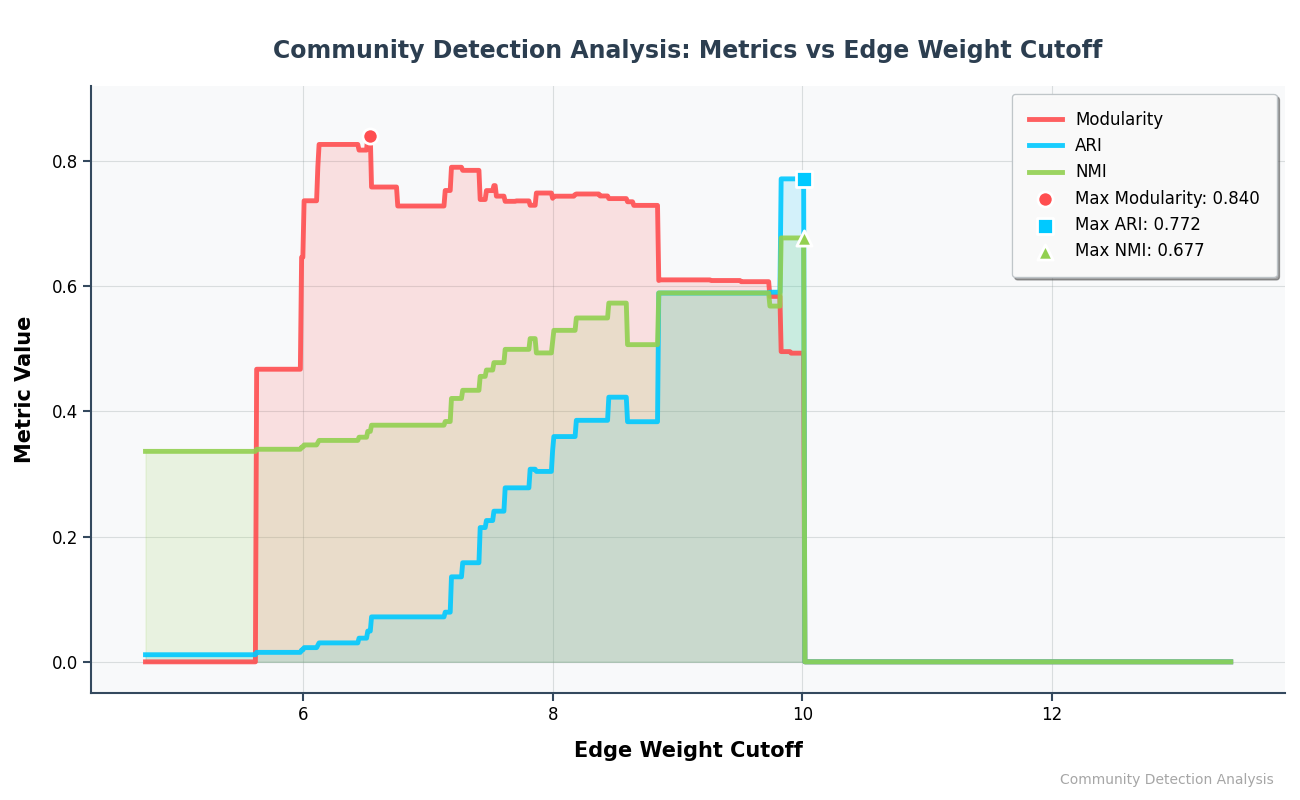}
        \caption{Karate}
    \end{subfigure}
    \hfill
    \begin{subfigure}{0.49\textwidth}
        \centering
        \includegraphics[width=\textwidth]{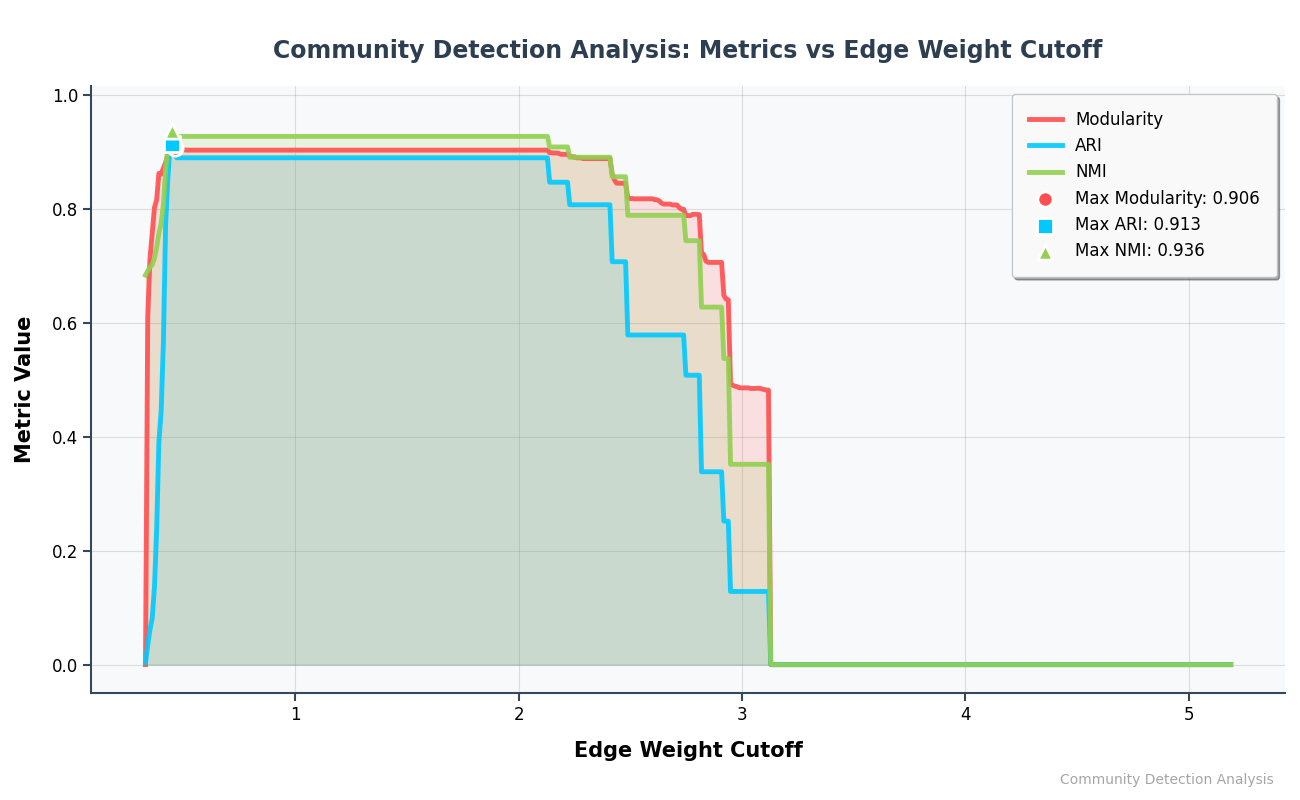}
        \caption{Football}
    \end{subfigure}
    \hfill
    \begin{subfigure}{0.49\textwidth}
        \centering
        \includegraphics[width=\textwidth]{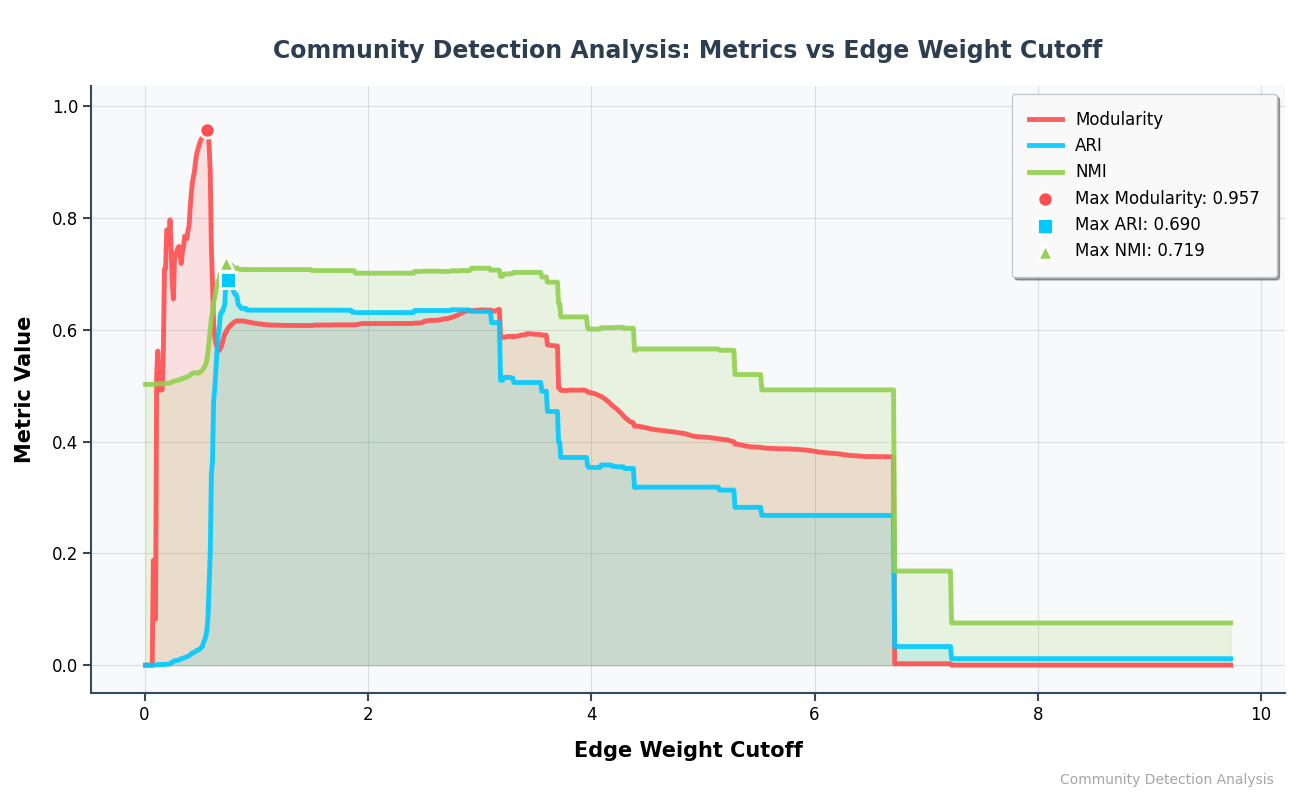}
        \caption{Facebook}
    \end{subfigure}

    \caption{Effect of edge weight cutoff on community structure}
    \label{fig9}
\end{figure}

\subsection{Comparison to other methods}\label{comparison}
We compare our community detection method with three classical approaches: the Girvan-Newman algorithm based on edge betweenness \cite{Girvan2002community}, the greedy modularity maximization algorithm \cite{Clauset2004finding, Reichardt2006statistical}, and the label propagation algorithm \cite{Cordasco2010community}. In addition, five Ricci curvature-based methods are considered, including unnormalized discrete Ollivier Ricci flow (DORF) \cite{Ni2019community}, normalized discrete Ollivier Ricci flow (NDORF), normalized discrete Lin-Lu-Yau Ricci flow (NDSRF) \cite{Lai2022normalized}, and two variants of discrete Lin-Lu-Yau Ricci flow (Rho and RhoN) \cite{Ma2025modified}.

Our method is based on the entropy flow (\ref{weight-j}) (EF).
Table~\ref{tab:community_results} summarizes the performance of all methods on three real-world networks. For EF, the parameters are set as follows: for the Karate network, $\alpha = 0.5$, $n = 30$, and $s = 0.1$; for the Football and Facebook networks, $\alpha = 0.5$, $n = 5$, and $s = 0.01$, based on the empirical analysis in previous sections. Overall, the entropy flow methods demonstrate competitive performance across all datasets.

\begin{table}[htbp]
\centering
\caption{Performance of various algorithms on real-world networks.
Best results are highlighted in red bold, and second-best results are shown in orange italic.}
\label{tab:community_results}
\begin{tabular}{lccc|ccc|ccc}
\toprule
\multicolumn{1}{c}{Methods \textbackslash Networks}
& \multicolumn{3}{c|}{Karate}
& \multicolumn{3}{c|}{Football}
& \multicolumn{3}{c}{Facebook} \\
\cmidrule(lr){2-4} \cmidrule(lr){5-7} \cmidrule(lr){8-10}
 & ARI & NMI & Q & ARI & NMI & Q & ARI & NMI & Q \\
\midrule
Girvan Newman
& \best{0.77} & \best{0.73} & 0.48
& 0.14 & 0.36 & 0.50
& 0.03 & 0.16 & 0.01 \\

Greedy Modularity
& 0.57 & 0.56 & 0.58
& 0.47 & 0.70 & 0.82
& 0.49 & 0.68 & 0.55 \\

Label Propagation
& 0.38 & 0.36 & 0.54
& {0.75} & {0.87} & {0.90}
& 0.39 & 0.65 & 0.51 \\

DORF
& 0.59 & 0.57 & 0.69
& \best{0.93} & \best{0.94} & \second{0.91}
& {0.67} & \best{0.73} & {0.68} \\

NDORF
& 0.59 & 0.57 & 0.69
& \best{0.93} & \best{0.94} & \second{0.91}
& \second{0.68} & \best{0.73} & {0.68} \\

NDSRF
& 0.59 & 0.57 & 0.68
& \best{0.93} & \best{0.94} & \second{0.91}
& \second{0.68} & \best{0.73} & {0.68} \\

Rho
& \best{0.77} & \second{0.68} & \second{0.82}
& {0.89} & {0.92} & {0.90}
& 0.64 & \second{0.72} & 0.63 \\

RhoN
& \best{0.77} & \second{0.68} & \best{0.84}
& {0.89} & \second{0.93} & \best{0.92}
& \best{0.69} & \second{0.72} & \second{0.95} \\

EF
& \best{0.77} & \second{0.68} & \best{0.84}
& \second{0.91} & \best{0.94} & \second{0.91}
& \best{0.69} & \second{0.72} & \best{0.96} \\
\bottomrule
\end{tabular}
\end{table}

\subsection{Comparison of cost time}\label{cost}
To evaluate the computational efficiency, we perform 20 iterations of the flow on each dataset with step size $s = 0.01$ and record the total running time. Each experiment is repeated 5 times, and the reported results are averaged over these runs. Under the same experimental settings, we compare the proposed entropy flow with the Ricci flow method (one-step envolution) reported in \cite{Ma2024evolution}. All experiments are conducted on identical datasets with the same initial edge weights, and only the flow update stage is considered for timing. The results demonstrate that, for all datasets, the entropy flow requires significantly less computation time than Ricci flow, highlighting its superior efficiency.

\begin{table}[H]
\centering
\caption{Comparison of  average computation time for 20 iterations. Time is measured in seconds.}
\label{tab:flow_time_comparison}
\begin{tabular}{lccc}
\toprule
Method & Karate  & Football & Facebook \\
\midrule
EF & 0.14 & 1.97 & 219.17 \\
one\_evol  & 6.71  & 61.51  & 13617.13 \\
\bottomrule
\end{tabular}
\end{table}

\section{Conclusion}
In this work, we proposed an entropy flow on weighted graphs based on the entropy between two $\alpha$-lazy outward random walks and established the existence of a unique global solution. Experimental results show that the proposed entropy flow achieves community detection performance comparable to Ricci flow while avoiding curvature optimization and distance computations, making it well suited for large-scale networks. Overall, entropy flow provides an efficient alternative to Ricci flow for uncovering network community structures.

\section*{Acknowledgements}
This research is partly supported by the National Natural Science Foundation of China (No. 12271039)
and the Open Project Program (No. K202303) of Key Laboratory of Mathematics and Complex Systems, Beijing Normal University.

\section*{Declarations}

\noindent
\textbf{Data availability}:
All data needed are available freely at https://github.com/12tangze12/Entropy-flow-on-weighted-graphs.

\noindent
\textbf{Conflict of interest}: The authors declared no potential conflicts of interest with respect to the research, authorship, and publication of this article.

\noindent
\textbf{Ethics approval}: The research does not involve humans and/or animals. The authors declare that there are no ethics issues to be approved or disclosed.


\bibliographystyle{elsarticle-num-names-alpha}

\bibliography{entropyflow}

@article{Anand2009,
  author = {Kartikeyan Anand and Ginestra Bianconi},
  title = {Entropy measures for networks: Toward an information theory of complex topologies},
  journal = {Phys. Rev. E},
  volume = {80},
  number = {4},
  pages = {045102},
  year = {2009}
}

@article{Bai2025ricci,
	title={On the {R}icci flow on trees},
	author={Bai, Shuliang and Hua, Bobo and Lin, Yong and Liu Shuang},
	journal={arXiv: 2509.22140.},
	year={2025}
}

@article{Bai2024ollivier,
	title={Ollivier {R}icci-flow on weighted graphs},
	author={Bai, Shuliang and Lin, Yong and Lu, Linyuan and Wang, Zhiyu and Yau, Shing-Tung},
	journal={Am. J. Math.},
	FJOURNAL = {American Journal of Mathematics},
	volume={146},
	number={6},
	pages={1723--1747},
	year={2024}
}

@article{Clauset2004finding,
  title = {Finding community structure in very large networks},
  author = {Clauset, Aaron and Newman, M. E. J. and Moore, Cristopher},
  journal = {Phys. Rev. E},
  volume = {70},
  issue = {6},
  pages = {066111},
  year = {2004}
}

@article{Cordasco2010community,
  title = {Community detection via semi-synchronous label propagation algorithms},
  author = {Cordasco, Gennaro and Luisa, Gargano},
  journal = {IEEE International Workshop on: Business Applications of Social Network Analysis (BASNA)},
  pages = {1-8},
  year = {2010}
}

@article{Danon2005,
author = {Danon, Leon and Díaz-Guilera, Albert and Duch, Jordi and Arenas, Alex},
title = {Comparing community structure identification},
journal = {J. Stat. Mech. Theory Exp.},
volume = {2005},
number = {09},
pages = {P09008},
year = {2005}
}

@article{Dehmer2008,
  author = {Matthias Dehmer},
  title = {Information processing in complex networks: Graph entropy and information functionals},
  journal = {Appl. Math. Comput.},
  volume = {201},
  pages = {82-94},
  year = {2012}
}

@article{Girvan2002community,
  title={Community structure in social and biological networks},
  author={Girvan, Michelle and Newman, Mark EJ},
  journal={Proc. Natl. Acad. Sci. USA},
  volume={99},
  number={12},
  pages={7821-7826},
  year={2002},
  publisher={The National Academy of Sciences}
}

@article{Hamilton1982three,
	title={Three-manifolds with positive {R}icci curvature},
	author={Hamilton, Richard S},
	journal={J. Differ. Geom.},
	FJOURNAL = {Journal of Differential geometry},
	volume={17},
	number={2},
	pages={255--306},
	year={1982},
	publisher={Lehigh University}
}

@article{Hubert1985comparing,
	title={Comparing partitions},
	author={Hubert, Lawrence and Arabie, Phipps},
	journal={J. Classification},
	volume={2},
	pages={193--218},
	year={1985}
}

@article{Kullback1951,
  author = {Solomon Kullback and Richard A. Leibler},
  title = {On information and sufficiency},
  journal = {Ann. Math. Statist.},
  volume = {22},
  pages = {79-86},
  year = {1951}
}

@article{Lai2022normalized,
	title={Normalized discrete {R}icci flow used in community detection},
	author={Lai, Xin and Bai, Shuliang and Lin, Yong},
	journal={Phys. A Stat. Mech. Appl.},
	FJOURNAL = {Physica A: Statistical Mechanics and its Applications},
	volume={597},
	pages={127251},
	year={2022},
	publisher={Elsevier}
}

@article{Leskovec2014SNAP,
title = {{SNAP} datasets: {S}tanford large network dataset collection},
author = {Leskovec, Jure},
journal = {http://snap.stanford.edu/data},
year = {2014}
}

@article{Li2026convergence,
title = {The convergence and uniqueness of a discrete-time nonlinear {M}arkov chain},
author = {Ruowei Li and Florentin Münch},
journal = {J. Funct. Anal.},
volume = {290},
number = {9},
pages = {111367},
year = {2026}
}

@article{Lin2011Ricci,
	title={Ricci curvature of graphs},
	author={Lin, Yong and Lu, Linyuan and Yau, Shing-Tung},
	journal={Tohoku Math. J.},
	volume={63},
	number={4},
	pages={605-627},
	year={2011}
}

@article{Ma2025modified,
	title={A modified {R}icci flow on arbitrary weighted graph},
	author={Ma, Jicheng and Yang, Yunyan},
	journal={J. Geom. Anal.},
    volume={35},
	pages={332},
	year={2025}
}

@article{Ma2024evolution,
	title={Evolution of weights on a connected finite graph},
	author={Ma, Jicheng and Yang, Yunyan},
	journal={arXiv: 2411.06393},
	year={2024}
}

@article{Ma2025piecewise,
	title={Piecewise-linear {R}icci curvature flows on weighted graphs},
	author={Ma, Jicheng and Yang, Yunyan},
	journal={arXiv: 2505.15395},
	year={2025}
}

@book{Newman2018networks,
  title={Networks},
  author={Newman, Mark},
  year={2018},
  publisher={Oxford Univ. Press}
}

@incollection {Ni2018network,
    AUTHOR = {Ni, Chien-Chun and Lin, Yu-Yao and Gao, Jie and Gu, Xianfeng},
     TITLE = {Network alignment by discrete {O}llivier-{R}icci flow},
 BOOKTITLE = {Graph drawing and network visualization},
    SERIES = {Lecture Notes in Comput. Sci.},
    VOLUME = {11282},
     PAGES = {447--462},
 PUBLISHER = {Springer, Cham},
      YEAR = {2018}
}

@article{Ni2019community,
	title={Community detection on networks with {R}icci flow},
	author={Ni, Chien-Chun and Lin, Yuyao and Luo, Feng and Gao, Jie},
	journal={Sci. Rep.},
	FJOURNAL = {Scientific Reports},
	volume={9},
	number={1},
	pages={9984},
	year={2019},
	publisher={Nature Publishing Group UK London}
}

@article{Ollivier2009ricci,
	title={Ricci curvature of {M}arkov chains on metric spaces},
	author={Ollivier, Yann},
	journal={J. Funct. Anal.},
	FJOURNAL = {Journal of Functional Analysis},
	volume={256},
	number={3},
	pages={810--864},
	year={2009},
	publisher={Elsevier}
}

@article{Reichardt2006statistical,
  title = {Statistical mechanics of community detection},
  author = {Reichardt, J\"org and Bornholdt, Stefan},
  journal = {Phys. Rev. E},
  volume = {74},
  issue = {1},
  pages = {016110},
  year = {2006}
}

@article{Sanhu2015graph,
  title={Graph Curvature for Differentiating Cancer Networks},
  author={Sandhu, Romeil and Georgiou, Tryphon and Reznik, Ed and Zhu, Liangjia and Kolesov, Ivan and Senbabaoglu, Yasin and Tannenbaum Allen},
  journal={Sci. Rep.},
  volume={5},
  pages={12323},
  year={2015}
}

@article{Shannon1948,
  author = {Claude E. Shannon},
  title = {A mathematical theory of communication},
  journal = {Bell System Tech. J.},
  volume = {27},
  pages = {379--423, 623--656},
  year = {1948}
}

@book{Thomas2005,
  author = {Thomas M. Cover and Joy A. Thomas},
  title = {Elements of Information Theory},
  publisher = {Wiley},
  edition = {2nd},
  year = {2005}
}

@article{Zachary1977,
author = {Zachary, Wayne W.},
title = {An information flow model for conflict and fission in small groups},
journal = {Journal of Anthropological Research},
volume = {33},
number = {4},
pages = {452-473},
year = {1977}
}

\end{document}